\newtheorem{thm} {Theorem} [section]
\newtheorem{prop}{Proposition} [section]
\newtheorem{lem} {Lemma} [section]
\newtheorem{con} {Conjecture}[section]
\newtheorem{corgl} {Corollary}  
\newtheorem{thmnn}{Theorem}                
\newtheorem{propnn}{Proposition}
\newtheorem{lemnn}{Lemma}
\newtheorem{cornn}{Corollary}
\theoremstyle{definition}
\newtheorem{rem} {Remark} [section]
\newtheorem{rems} [rem]{Remarks}
\newtheorem{exa} [rem] {Example}
\newcommand{\mf}{\mathfrak}
\newcommand{\mc}{\mathcal}
\newcommand{\mb}{\mathbb}
\newcommand{\ov}{\overline}
\newcommand{\un}{\underline}
\newcommand{\sm}{\setminus}         
\newcommand{\ot}{\otimes}           
\newcommand{\la}{\langle}
\newcommand{\ra}{\rangle}
\newcommand{\Hom}{{\rm Hom}}        
\newcommand{\End}{{\rm End}}
\newcommand{\Mat}{{\rm Mat}}
\newcommand{\Sym}{{\rm Sym}} 
\newcommand{\codim}{{\rm codim}}
\newcommand{\tr}{{\rm tr}}
\newcommand{\id}{{\rm id}}
\newcommand{\g}{\mf{g}}
\newcommand{\h}{\mf{h}}
\newcommand{\fb}{{\mf b}}
\newcommand{\fc}{\mf{c}}
\newcommand{\gl}{\mf{gl}}
\newcommand{\spl}{\mf{sl}}
\newcommand{\Dist}{{\rm Dist}}
\newcommand{\GL}{{\rm GL}}
\newcommand{\SL}{{\rm SL}}
\begin{document}

\title{Invariants in divided power algebras}

\begin{abstract}
Let $k$ be an algebraically closed field of characteristic $p>0$, let $G=\GL_n$ be the general linear group over $k$, let $\g=\gl_n$ be its Lie algebra
and let $D_s$ be subalgebra of the divided power algebra of $\g^*$ spanned by the divided power monomials with exponents $<p^s$.
We give a basis for the $G$-invariants in $D_s$ up to degree $n$ and show that these are also the $\g$-invariants.

We define a certain natural \emph{restriction property} and show that it doesn't hold when $s>1$.
If $s=1$, then $D_1$ is isomorphic to the truncated coordinate ring of $\g$  of dimension $p^{\dim(\g)}$ and we conjecture that the restriction property holds and show that this
leads to a conjectural spanning set for the invariants (in all degrees).

We give similar results for the divided power algebras of several matrices and of vectors and covectors, and show that in the second case the restriction property doesn't hold.

We also give the dimensions of the filtration subspaces of degree $\le n$ of the centre of the hyperalgebra of the Frobenius kernel $G_s$.
\end{abstract}

\author[R.~Tange]{Rudolf Tange}
\address
{School of Mathematics,
University of Leeds,
LS2 9JT, Leeds, UK}
\email{R.H.Tange@leeds.ac.uk}

\keywords{general linear group, divided power algebra, truncated coordinate ring, invariants, centre}
\subjclass[2020]{13A50, 16W22}
\maketitle

\section*{Introduction}
Let $k$ be an algebraically closed field of characteristic $p>0$, let $G=\GL_n$ be the general linear group over $k$ and let $\g$ be its Lie algebra: the $n\times n$ matrices with entries in $k$. For the representation theory of $G$ and $\g$ it is of interest to understand the centres $U^{[p]}(\g)^\g$ and $\Dist(G)^G$ of the restricted enveloping algebra $U^{[p]}(\g)$ and the hyperalgebra or distribution algebra $\Dist(G)$. In this paper we study their commutative analogues: the truncated symmetric algebra $\ov S(\g)=S(\g)/(x^p\,|\,x\in\g)$ and the divided power algebra $D(\g)$. They are isomorphic to their noncommutative analogues as $G$-modules under the conjugation action.
The connection with the representations of $G$ and $\g$ is described in more detail in Remark~\ref{rems.prelim}.3 and Corollary~3 to Theorem~\ref{thm.invs1} (the hyperalgebras of $G$ and $G_s$) and Remark~\ref{rems.prelim}.4 (the Schur algebra).
To state our results it is more convenient to work with $A_1(\g)=\ov S(\g^*)$ and $D(\g^*)$. This is harmless, since $\g\cong\g^*$ as $G$-modules.

Initially we were interested in describing the invariants for the group and the Lie algebra in $A_1(\g)$ and its higher analogues $A_s(\g)=S(\g^*)/(f^{p^s}\,|\,f\in\g^*)$.
It is easy to see that $A_1(\g)^G$ is bigger than the image of $S(\g^*)^G$ (or $S(\g^*)^\g$): the top degree element (unique up to a scalar multiple) of $A_1(\g)^G$ is not in the image of $S(\g^*)^G$. It turned out to be more convenient to work with the dual versions $D_s(\g^*)$ of the $A_s(\g)$, inside the divided power algebra $D(\g^*)$ where we have the divided power maps. Up to degree $n$ it is easy to give a basis for the invariants in $D(\g^*)$. In fact we can give three different bases, see Section~\ref{ss.invs}. So the task is then to describe the invariants of the subalgebras $D_s(\g^*)$ in terms of these bases. For one of the three aforementioned bases of $D(\g^*)$ we obtain a basis of $D_s(\g^*)$ by forming equivalence class sums for a certain equivalence relation on the basis, for the other two we obtain a basis of $D_s(\g^*)$ by taking a suitable subset of the basis, see Theorem~\ref{thm.invs2}.

We also consider the so-called ``restriction property" for several families of algebras, see Section~\ref{ss.res}. Intuitively, when the restriction property holds one may expect a universal description of the invariants, independent of the rank $n$. When it doesn't hold the description of the invariants will depend on the rank. In all the classical cases (invariants in the coordinate rings of vectors and covectors and of several matrices) the restriction property holds, at least for the group. For the algebras $D(\g^*)$ and $D_s(\g^*)$ that we study, the restriction property almost never holds. We can only conjecture it for $D_1(\g^*)=A_1(\g)$, see Conjecture~\ref{con.surjective_restriction}.


The paper is organised as follows. In Section~\ref{s.prelim} we discuss some, mostly well-known, results about divided power algebras, truncated coordinate rings, polarisation and $\mb Z$-forms, and multilinear invariants of several matrices that we will need later on.

Section~\ref{s.liealg} contains our main result which  describes the $G$-invariants in the algebra $D_s(\g^*)$: Theorem~\ref{thm.invs2}. To prove it, it is more convenient to first work with $D_s(\g)$. Theorem~\ref{thm.invs1} is our main result for this algebra. Infinitesimal invariants are discussed in Proposition~\ref{prop.inf_invs1}.
In Corollary~3 to Theorem~\ref{thm.invs2} we give the dimensions of the filtration subspaces of degree $\le n$ of the centre of the hyperalgebra of $\Dist(G_s)$. In Remark~\ref{rems.res}.3 and 4 we show that the restriction property doesn't hold for the algebras $A_s(\g)$ when $s\ge2$ and also not for the algebras $D_s(\g^*)$ when $s\ge2$. In Section~\ref{ss.dims} we give dimensions for the invariants in the graded pieces of some of the $A_s(\g)$.

In Section~\ref{s.several_matrices} we study the divided power algebra and its ``truncated" subalgebras for several matrices. Theorem~\ref{thm.invs3} describes the $G$-invariants and Proposition~\ref{prop.inf_invs2} describes the infinitesimal invariants. To state and prove these results we first need to state some, mostly well-known, results about conjugacy classes in the symmetric group for a Young subgroup, partial polarisation, and invariants in the full divided power algebra.

In Section~\ref{s.vecs_and_covecs} we study the divided power algebra and its truncated subalgebras for vectors and covectors. Proposition~\ref{prop.invs}(i) describes the $G$-invariants and Proposition~\ref{prop.invs}(ii) describes the infinitesimal invariants.
As preliminaries we first state some, mostly well-known, results about partial polarisation, and orbits in the symmetric group for the multiplication action of a product of two Young subgroups. In Remark~\ref{rems.vecs_and_covecs}.3 we show that the restriction property doesn't hold in this case.

\section{Preliminaries}\label{s.prelim}
Throughout this paper $k$ is an algebraically closed field of characteristic $p>0$ and $s$ is an integer $\ge1$.

\subsection{Lucas's Theorem and Legendre's Theorem}
We remind the reader of two basic results from number theory.
\begin{thmnn}[Lucas's Theorem]
Let $a=\sum_{i\ge0}a_ip^i$ and $b=\sum_{i\ge0}b_ip^i$ be the $p$-adic expansions of the integers $a,b\ge0$.
Then $\binom{a}{b}\equiv\prod_{i\ge0}\binom{a_i}{b_i}$ mod $p$.
\end{thmnn}
\begin{thmnn}[Legendre's Theorem]
Let $\nu_p:\mb Z_{>0}\to\mb Z_{\ge0}$ be the $p$-adic valuation, let $a\ge1$ be an integer, and let $s_p(a)$ be the sum of the $p$-adic digits of $a$.
Then $\nu_p(a!)=\frac{a-s_p(a)}{p-1}$.
\end{thmnn}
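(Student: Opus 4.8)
The plan is to prove this classical formula (Legendre's formula) in two stages: first reduce to the more familiar shape $\nu_p(a!)=\sum_{i\ge1}\lfloor a/p^i\rfloor$, and then evaluate that finite sum using the base-$p$ expansion of $a$ and a geometric series. For the first stage, write $a!=\prod_{m=1}^a m$, so that by additivity of $\nu_p$ we have $\nu_p(a!)=\sum_{m=1}^a\nu_p(m)$. For each $i\ge1$ the number of $m$ with $1\le m\le a$ and $p^i\mid m$ is exactly $\lfloor a/p^i\rfloor$, and an integer $m$ with $\nu_p(m)=v$ satisfies $p^i\mid m$ precisely for $i=1,\dots,v$; hence $\nu_p(m)=\sum_{i\ge1}[\,p^i\mid m\,]$, and summing over $m$ and interchanging the order of summation gives $\nu_p(a!)=\sum_{i\ge1}\#\{1\le m\le a:p^i\mid m\}=\sum_{i\ge1}\lfloor a/p^i\rfloor$, a finite sum since the terms vanish once $p^i>a$.

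For the second stage, write $a=\sum_{j=0}^m a_jp^j$ with $0\le a_j\le p-1$ and $a_m\ne0$, so that $s_p(a)=\sum_{j=0}^m a_j$. For $1\le i\le m$ one has $\lfloor a/p^i\rfloor=\sum_{j=i}^m a_jp^{j-i}$, while the floor is $0$ for $i>m$. Substituting and exchanging the two (finite) summations,
\begin{align*}
\sum_{i\ge1}\lfloor a/p^i\rfloor
&=\sum_{i=1}^m\sum_{j=i}^m a_jp^{j-i}
=\sum_{j=1}^m a_j\sum_{i=1}^j p^{j-i}\\
&=\sum_{j=1}^m a_j\,\frac{p^j-1}{p-1}
=\frac{1}{p-1}\Big(\sum_{j=1}^m a_jp^j-\sum_{j=1}^m a_j\Big).
\end{align*}
Adding the $j=0$ term $a_0$ to both inner sums (which does not change their difference) turns the bracket into $a-s_p(a)$, giving $\nu_p(a!)=\frac{a-s_p(a)}{p-1}$, as claimed.

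An alternative, fully self-contained route is induction on $a$: from $\nu_p(a!)=\nu_p((a-1)!)+\nu_p(a)$ it suffices to verify $s_p(a-1)=s_p(a)-1+(p-1)\nu_p(a)$, which one sees by writing $a=bp^v$ with $p\nmid b$ (so $v=\nu_p(a)$) and noting that passing from $a$ to $a-1$ replaces the trailing block of $v$ zero digits by digits $p-1$ and lowers the next digit by $1$. Either way the argument is elementary; the only point requiring a little care is the interchange of the order of summation (legitimate because all sums involved are finite), so I do not anticipate a real obstacle here.
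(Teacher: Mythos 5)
Your proof is correct: both stages (the reduction $\nu_p(a!)=\sum_{i\ge1}\lfloor a/p^i\rfloor$ by counting multiples of $p^i$, and the evaluation of that sum via the base-$p$ digits and the geometric series $\sum_{i=1}^{j}p^{j-i}=\frac{p^j-1}{p-1}$) are carried out accurately, and the digit-carrying induction you sketch as an alternative also works. The paper itself states Legendre's Theorem without proof, as a classical reminder from elementary number theory, so there is no argument in the text to compare against; your derivation is the standard one and fills that gap correctly.
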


\subsection{The divided power algebra and certain subalgebras}\ \\
Let $V=k\ot_{\mb Z} V_{\mb Z}$ be a vector space over $k$ ``defined over $\mb Z$" where $V_{\mb Z}$ has $\mb Z$-basis $(y_1,\ldots,y_m)$. We will denote $1\ot y_i\in V$ just by $y_i$. Inside the symmetric algebra $S(V_{\mb Q})$ of $V_{\mb Q}=\mb Q\ot_{\mb Z}V_{\mb Z}$ we can form the divided power monomials $\prod_{i=1}^my_i^{(t_i)}$ where $t_i\ge0$ and $x^{(t)}=\frac{1}{t!}x^t$. They are linearly independent over $\mb Q$ and their $\mb Z$-span is a $\mb Z$-subalgebra $D(V_{\mb Z})$ of $S(V_{\mb Q})$. Now we put $D(V)=k\ot_{\mb Z}D(V_{\mb Z})$.

The algebra $S(V_{\mb Q})$ has the divided power map $\gamma_i=(x\mapsto x^{(i)}):I_{\mb Q}\to S(V_{\mb Q})$ where $I_{\mb Q}$ consists of the polynomials without constant term. The $\gamma_i$, $i\ge1$, preserve $I_{\mb Z}=D(V_{\mb Z})\cap I_{\mb Q}$ and therefore induce divided power maps $\gamma_i:I_{\mb Z}\to D(V_{\mb Z})$.
These $\gamma_i$ preserve $pI_{\mb Z}$ for $i\ge1$ and, reducing mod $p$ and extending from $\mb F_p=\mb Z/p\mb Z$ to $k$, we obtain divided power maps $\gamma_i=(x\mapsto x^{(i)}):I\to D(V)$, where $I=k\ot I_{\mb Z}$,
which satisfy: 
\begin{enumerate}[{\rm (1)}]
\item $\gamma_0(x)=1$, $\gamma_1(x)=x$ and $\gamma_i(x)\in I$ for $i\ge 1$ and for all $x\in I$,\label{eq.divpower0}
\item $\gamma_i(x+y)=\sum_{j=0}^i\gamma_j(x)\gamma_{i-j}(y)$ for $i\ge 0$ and for $x,y\in I$,\label{eq.divpower_sum} 
\item $\gamma_i(xy)=x^i\gamma_i(y)$ for $i\ge 0$, $x\in D(V)$ and $y\in I$,\label{eq.divpower_prod1}
\item $\gamma_i(x)\gamma_j(x) = \binom{i+j}{i}\gamma_{i+j}(x)$ for $i,j\ge0$ and $x\in I$, and \label{eq.divpower_prod2}
\item $\gamma_i(\gamma_j(x)) = \frac{(ij)!}{(i!)^j j!}\gamma_{ij}(x)$ for $i,j\ge0$ and $x\in I$. \label{eq.divpower_comp}
\end{enumerate}
The algebra $D(V)$ is commutative and graded and has a $\GL(V)$-action which is ``defined over $\mb Z$", so it is clear that the $\gamma_i$ are $\GL(V)$-equivariant.
The span $D_s(V)$ of the (divided power) monomials $\prod_iy_i^{(t_i)}$, $0\le t_i<p^s$ is a $\GL(V)$-stable graded subalgebra of $D(V)$ of dimension $p^{sm}$.
It can be characterised as the distribution algebra or hyperalgebra of the $s$-th Frobenius kernel $V_{a,s}$ of the additive group scheme $V_a$, see \cite[I.4.25]{Jan}.
We denote the graded pieces of degree $r$ of $D(V)$ and $D_s(V)$ by $D^r(V)$ and $D_s^r(V)$.

\begin{lem}\label{lem.divpower}
Let $B=\bigoplus_{r\ge2}D^r_1(V)$. Then $B$ is stable under multiplication and under the $\gamma_i, i\ge1$.
\end{lem}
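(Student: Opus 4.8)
The plan is to prove the two stability assertions separately. Stability under multiplication is immediate: $D_1(V)$ is a graded subalgebra of $D(V)$ (as recalled above --- equivalently, it is the subalgebra of $D(V)$ generated by $V=D^1(V)$), so the product of homogeneous elements of degrees $r,r'\ge2$ lies in $D_1^{r+r'}(V)$ with $r+r'\ge2$, and since $B$ is the span of its homogeneous pieces we get $B\cdot B\subseteq B$. For stability under $\gamma_i$, note first that $B\subseteq I$, so $\gamma_i$ is defined on $B$. I would then record three elementary facts: (a) every $u\in B$ can be written as a finite sum $u=\sum_l z_lw_l$ with $z_l,w_l$ homogeneous of degree $\ge1$ in $D_1(V)$ and $\deg z_l+\deg w_l\ge2$ --- indeed $D_1^r(V)$ for $r\ge2$ is spanned by the monomials $\prod_j y_j^{(t_j)}$ with $\sum_j t_j=r$ and $t_j<p$, each of which, via $y_j^{(t)}=(t!)^{-1}y_j^t$ for $t<p$, is a scalar multiple of an ordinary monomial of degree $r$, which factors as $y_{j_0}\cdot(\text{monomial of degree }r-1)$ with both factors nonzero in $D_1(V)$; (b) for $z\in I$ one has $z^n=n!\,\gamma_n(z)$ for all $n\ge0$ (iterate relation~\ref{eq.divpower_prod2}, using $\gamma_1(z)=z$), hence in characteristic $p$: $z^n=0$ for $n\ge p$ and $\gamma_n(z)=(n!)^{-1}z^n$ for $0\le n<p$; (c) $D_1(V)$ is closed under products and powers.

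Given $u\in B$, write $u=\sum_{l=1}^N z_lw_l$ as in~(a). Applying the addition formula~\ref{eq.divpower_sum} repeatedly and then~\ref{eq.divpower_prod1} (with $x=z_l\in D(V)$, $y=w_l\in I$) yields
\[\gamma_i(u)=\sum_{n_1+\cdots+n_N=i}\,\prod_{l=1}^N z_l^{\,n_l}\,\gamma_{n_l}(w_l).\]
By~(b) a summand vanishes unless $n_l<p$ for every $l$; for such summands $\gamma_{n_l}(w_l)=(n_l!)^{-1}w_l^{\,n_l}$, so the summand is a scalar multiple of $\prod_l z_l^{\,n_l}w_l^{\,n_l}\in D_1(V)$ (by~(c)) that is homogeneous of degree $\sum_l n_l(\deg z_l+\deg w_l)\ge2\sum_l n_l=2i\ge2$. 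Hence $\gamma_i(u)\in\bigoplus_{r\ge2}D_1^r(V)=B$, completing the proof.

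The reason an argument is needed is that $\gamma_i$ is neither additive nor $D_1(V)$-preserving already on $V=D_1^1(V)$: for instance $\gamma_p(y_1)=y_1^{(p)}\notin D_1(V)$. The mechanism above circumvents this --- writing elements of $B$ as sums of products $z_lw_l$ of augmentation-ideal elements lets one peel off the factor $z_l$ via~\ref{eq.divpower_prod1}, and the characteristic-$p$ vanishing $z_l^{\,p}=0$ then kills precisely the terms where a ``bad'' divided power $\gamma_n(w_l)$, $n\ge p$, could appear. I expect the only delicate points to be verifying that the decomposition in~(a) can be taken with homogeneous factors, and the degree bookkeeping in the multi-index sum; everything else is formal manipulation with the divided-power axioms.
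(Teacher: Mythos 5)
Your proof is correct, and it takes a genuinely different route from the paper's. The paper first reduces to the single operation $\gamma_p$: by Lucas's Theorem the product formula~(4) expresses a general $\gamma_m$ (up to a nonzero scalar) through the $\gamma_{p^i}$, and by Legendre's Theorem the composition formula~(5) expresses $\gamma_{p^{i+1}}$ through $\gamma_p\circ\gamma_{p^i}$; it then checks $\gamma_p(u)=0$ on each spanning monomial $u$ of $B$, using~(3) when $u$ involves two variables and a separate Legendre valuation computation $\nu_p\bigl(\tfrac{(jp)!}{(j!)^p\,p!}\bigr)=j-1$ for the single-variable monomials $y_i^{(j)}$, $2\le j<p$. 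You instead treat all $\gamma_i$ uniformly: the decomposition $u=\sum_l z_lw_l$ into products of two positive-degree elements of $D_1(V)$, combined with the multinomial expansion of $\gamma_i$ over a sum and the peeling identity $\gamma_{n}(z w)=z^{n}\gamma_{n}(w)$, lets the single observation $z^p=p!\,\gamma_p(z)=0$ for $z\in I$ (itself an immediate consequence of~(4)) kill every term in which a divided power of order $\ge p$ would appear. This eliminates both number-theoretic inputs and the case analysis on monomials; in effect your factorisation argument subsumes the paper's single-variable case, since $y_i^{(j)}=\tfrac1j\,y_i\cdot y_i^{(j-1)}$ for $j<p$. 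What the paper's route buys in exchange is the reduction principle itself (stability under all $\gamma_i$ follows from stability under multiplication and $\gamma_p$ alone), which is reusable in other settings, e.g.\ in the commented-out generalisation to $\bigoplus_{r>p^{s-1}}D_s^r(V)$ where the relevant monomials no longer factor within the subalgebra and one genuinely needs the valuation $s_p(j)-1$. The only points in your write-up that deserve explicit mention are the ones you already flag: that the factors $z_l,w_l$ can be taken homogeneous of positive degree inside $D_1(V)$ (which holds because exponents $<p$ make every divided power monomial a nonzero scalar multiple of an ordinary one), and the degree bound $\sum_l n_l(\deg z_l+\deg w_l)\ge 2i\ge 2$ placing the result back in $B$.
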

\begin{proof}
Clearly, $B$ is stable under multiplication. Note that $\binom{\sum_ia_ip^i}{\sum_ib_ip^i}$ is nonzero mod $p$ if $0\le b_i\le a_i<p$ for all $i$ by Lucas's Theorem, and $\frac{p^{i+1}!}{(p^i!)^p p!}$ is nonzero mod $p$ by Legendre's Theorem. So in view of \eqref{eq.divpower_prod2} and \eqref{eq.divpower_comp} it is enough to show that $B$ is stable under $\gamma_p$. Clearly, $B$ is stable under the $\gamma_i$, $1\le i<p$, so by \eqref{eq.divpower_sum} it is enough to show that $\gamma_p(u)=0$ for any divided power monomial $u$ in the $y_i$ of degree $j$ with $2\le j<p$. If $u$ involves at least two variables, then this follows immediately from \eqref{eq.divpower_prod1}. So assume $u=y_i^{(j)}$ for some $i$. Then $\gamma_p(u)= \frac{(jp)!}{(j!)^p p!} y_i^{(jp)}$ by \eqref{eq.divpower_comp}. By Legendre's Theorem the $p$-adic valuation of $\frac{(jp)!}{(j!)^p p!}$ is $\frac{jp-s_p(jp)}{p-1} - (\frac{p(j-s_p(j))}{p-1}+1)$, where $s_p(j)$ denotes the sum of the $p$-adic digits of $j$. Now $s_p(jp)=s_p(j)=j$, since $j<p$, so this $p$-adic valuation equals $j-1$ which is $\ge 1$. So $\frac{(jp)!}{(j!)^p p!}=0 \mod p$.
\end{proof}

\subsection{Truncated coordinate rings}\label{ss.trunc}
Define the ideal $I_s$ of the coordinate ring $A=A(V)=k[V]=S(V^*)$ of $V$ by $I_s=(f^p\,|\,f\in V^*)=({x_i}^{p^s}\,|\,1\le i\le m)$, where $(x_1,\ldots,x_m)$ is the dual basis of $(y_1,\ldots,y_m)$. Put $A_s=A_s(V)=k[V]/I_s$.
We call $A_s(V)$ the \emph{$s$-th truncated coordinate ring of $V$}. It is a commutative graded algebra of dimension $p^{sm}$ and can be characterised as the coordinate ring of the aforementioned infinitesimal group scheme $V_{a,s}$. We denote the graded pieces of degree $r$ of $A(V)$ and $A_s(V)$ by $A^r(V)$ and $A_s^r(V)$.
There is a $\GL(V)$-equivariant isomorphism of graded Hopf algebras $D_s(V)\cong A_s(V)^*$. It maps $\prod_{i=1}^my_i^{(t_i)}$, $0\le t_i<p^s$, to the dual basis element of $\prod_{i=1}^m{x_i}^{t_i}$.
Put $A_s^r=A_s^r(V)$. The top degree of $A_s$ is $N=(p^s-1)m$ and $A_s(N)=k\prod_{i=1}^m{x_i}^{p^s-1}$ is 1-dimensional. Since $\GL(V)$ acts through $\det^{1-p^s}$ on $A_s(N)$, the multiplication defines an $\GL(V)$-invariant pairing $A_s^r\times (A_s^{N-r}\ot\det^{p^s-1})\to k$. This pairing is nondegenerate, so we obtain isomorphisms $(A_s^r)^*\cong A_s^{N-r}\ot\det^{p^s-1}$ and $D_s(V)\cong A_s(V)^*\cong A_s(V)\ot\det^{p^s-1}$ of $\GL(V)$-modules. The latter maps $\prod_{i=1}^my_i^{(t_i)}$ to $\prod_{i=1}^mx_i^{p^s-t_i}$. 

\subsection{The polarisation map and $\mb Z$-forms}\label{ss.polarisation_Zform}
The polarisation map $$P : S^r(V^*)\to (S^rV)^*$$ in degree $r$ sends $f\in S^r(V^*)$ to the
the multi-homogeneous component of degree $(1,\ldots,1)$ of the $r$-variable polynomial function
$(v_1,\ldots,v_r)\mapsto f(v_1+\cdots+v_r)$.
Let $F:V^{\oplus r}\to k$ be $r$-linear, and let $f=(v\mapsto F(v,\ldots,v))\in S^r(V^*)$, then 
$$P(f) = ((v_1,\ldots,v_r)\mapsto \sum_{\sigma\in S_r}F(v_{\sigma(1)},\ldots,v_{\sigma(1)}))\,,$$
where $S_r$ denotes the symmetric group or rank $r$.
We extend $P$ to a linear map from $k[V]=S(V^*)$ to the graded dual $S(V)^{*\rm gr}$ of $S(V)$ and this is an algebra homomorphism. The multiplication on this graded dual comes from the comultiplication on $S(V)$, see \cite[III.11]{Bou}.

Inside $S(V^*_{\mb Q})$ we have the ``divided power $\mb Z$-form" $D(V_{\mb Z}^*)$. The polarisation map over $\mb Q$ maps this $\mb Z$-form onto the standard $\mb Z$-form of the graded dual of $S(V_{\mb Q})$. Reducing mod $p$ we obtain an isomorphism from $D\stackrel{\rm def}{=}D(V^*)$ to $S(V)^{*\rm gr}$. We now identify these two. Then $D^r\stackrel{\rm def}{=}D^r(V^*)=S^r(V)^*=((V^{\otimes r})^*)^{S_r}$: the space of symmetric $r$-linear functions $V^{\oplus r}\to k$, and $D_s^r=D_s^r(V^*)$ consists of the symmetric $r$-linear functions that vanish when $p^s$ arguments are the same. Furthermore the polarisation map over $k$ can now be identified with the map $k[V]=S(V^*)\to D(V^*)$ given by inclusion of $\mb Z$-forms.
We note that for a symmetric $r$-linear function $V^{\oplus r}\to k$ to vanish when $p^s$ arguments are the same it is enough to check that this holds for $r$-tuples of basis vectors. This follows from the fact that the $S_{p^s}$-stabiliser of a nonconstant map $\{1,\ldots,p^s\}\to\{1,\ldots,t\}$, $t$ any integer $\ge2$, is a proper Young subgroup of $S_{p^s}$, so the orbit of such a map has size divisible by $p$. 

We now return to the polarisation map in characteristic $p$. It follows easily from the definition that $P$ has image $D_1$ and kernel $I_1$, so it induces a $\GL(V)$-equivariant isomorphism $A_1=A_1(V)\stackrel{\sim}{\to}D_1(V^*)=D_1$ of graded algebras.

\subsection{Adjoint invariants and symmetric functions}\label{ss.invs}
From now on until the end of Section~\ref{s.liealg} we specialise $V$ to $\g=\gl_n=\End(k^n)$ with $G=\GL_n$ acting by conjugation. So $D=D(\g^*)$ and $A=A(\g)$. The symbol $V$ may now denote another vector space. We work with the bases $(E_{ij})_{1\le i,j\le n}$ of $\g$ with dual basis $(x_{ij})_{1\le i,j\le n}$ of $\g^*$, where $E_{ij}$ is the elementary matrix which is 1 in row $i$ and column $j$ and 0 elsewhere. Note that the trace form on $\g$ is nondegenerate and gives an isomorphism $\g\stackrel{\sim}{\to}\g^*$ of $G$-modules which maps $E_{ij}$ to $x_{ji}$. Note also that the $G$-action factors through the $\SL(\g)$-action, so we have isomorphisms of $G$-modules $D_s^r\cong (A_s^r)^*\cong A_s^{N-r}$, where $N=(p^s-1)n^2$ is the top degree. The $G$-invariants in $D^r=((\g^{\otimes r})^*)^{S_r}$ are the $S_r$-invariants in the space of $G$-invariants of $(\g^{\otimes r})^*$. By ``Schur-Weyl duality" \cite[Sect~4]{DeCProc}, the space of $G$-invariants of $(\g^{\otimes r})^*$ can be described as the image of the group algebra $kS_r$ of the symmetric group $S_r$ under the $S_r$-equivariant linear map
$$\pi\mapsto f_\pi\,,$$
where $f_\pi(X_1,\ldots,X_r)=\prod_{i=1}^r\tr(X_{\sigma_i})$, $\pi=\sigma_1\cdots\sigma_s$ is the disjoint cycle form of $\pi$ (including 1-cycles), $\tr(X_\sigma) \stackrel{\text{def}}{=}\tr(X_{i_1}\cdots X_{i_t})$ for any cycle $\sigma=(i_1,\ldots,i_t)$, and the $S_r$-action on $kS_r$ is by conjugation. This map is injective when $r\le n$. If we work with $\g^{\otimes r}$ instead of the isomorphic module $(\g^{\otimes r})^*$, then the map is given by $\pi\mapsto E_{\pi}$, where $E_\pi=\sum_{i\in\{1,\ldots,n\}^r}\otimes_{l=1}^rE_{i_{\pi(l)}i_l}$.\footnote{Identifying  $\g^{\otimes r}$ with $\End((k^n)^{\otimes r})$, the action of $S_r$ on tensor space is given by $\pi\mapsto E_{\pi^{-1}}$.}

We make some observations about symmetric functions. For the basics we refer to \cite{Mac}. For an integer $i\ge1$ and $X\in\Mat_n$ we define $e_i(X)=\tr(\wedge^iX), h_i(X)=\tr(S^iX)$ and $p_i(X)=\tr(X^i)$. Clearly, the $e_i,h_i,p_i$ can be considered as elements of $k[\g]$ and therefore also as elements of $D(\g)$, see Section~\ref{ss.polarisation_Zform}.
For a partition $\lambda$ of $r$ we define $e_\lambda$ to be the product of the $e_{\lambda_i}$ and we define $h_\lambda$ and $p_\lambda$ in the same way. Via the Chevalley Restriction Theorem (CRT) we can identify these functions with the equally named symmetric functions. Writing $\lambda$ in the form $\lambda=1^{m_1}2^{m_2}\cdots$ we define $z_\lambda=\prod_{i\ge1}i^{m_i}m_i!$ and $u_\lambda=\prod_{i\ge1}m_i!$. Recall that $z_\lambda$ is the order of the centraliser in $S_r$ of a permutation of cycle type $\lambda$. We will call $\frac{1}{z_\lambda}p_\lambda, \frac{1}{u_\lambda}h_\lambda,\frac{1}{u_\lambda}e_\lambda\in S(\g_{\mb Q}^*)$ \emph{divided} $p_\lambda,h_\lambda$ and $e_\lambda$.

For the divided $e_\lambda$'s and $h_\lambda$'s, $\lambda$ a partition of $r$, it is clear that by reduction mod $p$ they can be considered as elements of $(D^r)^G$: $e_\lambda=\prod_{i\ge 1}e_i^{(m_i)}$, $h_\lambda=\prod_{i\ge 1}h_i^{(m_i)}$. We claim that the same is true for the divided $p_\lambda$'s and that, for $n\ge r$, these three families form three bases of $(D^r)^G$. For the first claim we work over $\mb Q$. By \cite[Ex I.6.10, p 110]{Mac} the $\mb Z$-span of the $p_\lambda$'s is the same as that of the $u_\lambda m_\lambda$'s, where the $m_\lambda$'s are the monomial symmetric functions. Taking the ``dual" lattices, i.e. everything that is integral on the lattice via the canonical form, we obtain that the $\mb Z$-span of the divided $p_\lambda$'s is the same as that of the divided $h_\lambda$'s, see \cite[I.4.5, I.4.7]{Mac}. Applying the involution $\omega$ we see that $\mb Z$-span of the divided $p_\lambda$'s is also the same as that of the divided $e_\lambda$'s, see \cite[I.2.6-13]{Mac}. So $\frac{1}{z_\lambda}p_\lambda$ belongs to $D(\g_{\mb Z}^*)$.

To prove the second claim we return to the above $S_r$-equivariant linear map from $kS_r$ onto the $G$-invariant multilinear functions of $r$ matrices. It is injective when $n\ge r$. So in this case $(D^r)^G$ is simply the image of the centre $(kS_r)^{S_r}$ of $kS_r$. If $\pi\in S_r$ has cycle type $\lambda$, then $p_\lambda=(X\mapsto f_\pi(X,\ldots,X))$, so, as an element of $S^r(\g_{\mb Q})^*$ via the polarisation map $P$, it is $\sum_{\sigma\in S_r}f_{\sigma\pi\sigma^{-1}}$. Therefore the sum of the conjugacy class $[\pi]$ is mapped to divided $p_\lambda$. So the divided $p_\lambda$'s form a basis and therefore the divided $e_\lambda$'s and $h_\lambda$'s as well.

\begin{exa}
Take $p=2$. Put $u=\text{divided}\,p_{21}+\text{divided}\,p_{1^3} = \frac{1}{2}p_2p_1+\frac{1}{6}p_1^3\in D(\g_{\mb Z}^*)$. Then $u=(X\mapsto\frac{1}{2}\tr(X^2)\tr(X)+\frac{1}{6}\tr(X)^3)$ corresponds to the symmetric 3-linear function
$$(X,Y,Z) \mapsto \tr(XY)\tr(Z) + \tr(XZ)\tr(Y) + \tr(YZ)\tr(X) + \tr(X)\tr(Y)\tr(Z)\,.$$
In characteristic $p$, this function vanishes when 2 arguments are the same,
so the reduction mod $p$ of $u$ belongs to $D_1$. 
When $n=2$ this function is nonzero (take e.g. $X=E_{12}, Y=E_{21}, Z=E_{11}$), but is zero on triples of diagonal $2\times2$-matrices. The same is true for any symmetric $r$-linear function $r\gl_2\to k$, $r>2$, which vanishes when 2 arguments are the same.
Similarly, $\text{divided}\,p_3 = \frac{1}{3}p_3=((X,Y,Z) \mapsto \tr(XYZ) + \tr(YXZ))$ vanishes in characteristic $2$ when 2 arguments are the same. This function is clearly nonzero for $n\ge2$, but is zero on triples of diagonal matrices for all $n\ge1$.
Note that $e_4=e_2^{(2)}$ on the diagonal matrices for $p=2$ and any $n\ge4$, but not on the $n\times n$ matrices.
\end{exa}

\subsection{The restriction properties}\label{ss.res}
Recall that for a $\g$-module $V$ the subspace of $\g$-invariants in $V$ is defined by $V^\g=\{v\in V\,|\,x\cdot v=0\ \text{for all}\ x\in\g\}$. If $V$ is a commutative $k$-algebra on which $\g$ acts by derivations, for example the differentiated action of an action of $G$ by automorphisms, then any $p$-th power is a $\g$-invariant.

We will occasionally indicate the dependence of our algebras $A_s$ and $D_s$ on the rank $n$ with an extra left subscript $n$.
The embedding $X\mapsto\left(\begin{smallmatrix}X&0\\0&0\end{smallmatrix}\right):\gl_{n-1}\hookrightarrow\gl_n$
induces a $\GL_{n-1}$-equivariant surjections ${}_nA\twoheadrightarrow {}_{n-1}A$ and ${}_nA_s\twoheadrightarrow {}_{n-1}A_s$ and therefore restriction maps
\begin{align}
({}_nA_s)^{\GL_n}&\to ({}_{n-1}A_s)^{\GL_{n-1}}.\label{eq.restriction_group}\\
({}_nA_s)^{\gl_n}&\to ({}_{n-1}A_s)^{\gl_{n-1}}.\label{eq.restriction_liealg}
\end{align}
We say that the algebras $({}_nA_s)_{n\ge1}$ have the \emph{group restriction property} if the above maps \eqref{eq.restriction_group} are surjective for all $n\ge2$.
The \emph{infinitesimal restriction property}, or \emph{Lie algebra restriction property}, can be defined analogously using the maps \eqref{eq.restriction_liealg} and one can define similar restriction maps for the algebras ${}_nA=k[\gl_n]$, ${}_nD$ and ${}_nD_s$.

As is well known, $e_1,\ldots,e_n$ are algebraically independent and generate $A^G$. Clearly, $e_i$ for $\gl_n$ restricts to $e_i$ for $\gl_{n-1}$, so the algebras ${}_nA$, $n\ge1$, have the group restriction property. Furthermore, by Veldkamp's Theorem for $A$, $A$ is generated by $A^p$ and $A^G$, see \cite[Sect~3.5]{PrT} and the references there. So the algebras ${}_nA$, $n\ge1$, also have the infinitesimal restriction property.

\begin{rems}\label{rems.prelim}
1.\ Although the $S_r$-invariants of $kS_r$, i.e. the centre of $kS_r$, in general ($r>n$) does not surject onto the $S_r$-invariants in $((\g^{\otimes r})^*)^G$, it seems that this image does contain the symmetric $G$-invariant multilinear functions of $r$ matrices which vanish when $p$ arguments are the same. The first statement is equivalent to the statement that the algebras ${}_nD$ don't have the restriction property, see Remark~\ref{rems.res}.4. The second statement is implied by Conjecture~\ref{con.surjective_restriction}.\\
2.\ From our discussion of $(D^r)^G$ we get an isomorphism from $(kS_r)^{S_r}$ to the projective limit $\displaystyle\lim_{\stackrel{\longleftarrow}{n}}(S^r(\gl_n)^*)^{\GL_n}$. This map is a characteristic $p$ version the ``characteristic map" from \cite[I.7.3]{Mac}.\\ 
3.\ The map $f\mapsto(X\mapsto f(X-I)):k[\g]\to k[G]$, $I$ the identity matrix, induces $G$-equivariant filtration preserving algebra isomorphisms $A_s\stackrel{\sim}{\to} k[G_s]$, $s\ge1$.
Here the filtrations are given by the powers of the maximal ideals of $0$ resp. $I$. Taking duals we obtain $G$-equivariant filtration preserving coalgebra isomorphisms $\Dist(G_s)\stackrel{\sim}{\to}D_s(\g)\cong D_s$, $s\ge1$, where $\Dist(G_s)$ is the distribution or hyperalgebra of the $s$-th Frobenius kernel $G_s$ of $G$. These fit together to give a $G$-equivariant filtration preserving coalgebra isomorphism
$$\Dist(G)\stackrel{\sim}{\to}D(\g)\cong D \eqno(*)$$ of which the associated graded is a $G$-equivariant isomorphism of Hopf algebras. All this holds in much bigger generality, see \cite[Sect~2]{FP}. We note that the algebra $\Dist(G_1)$ is isomorphic to the restricted enveloping algebra $U^{[p]}(\g)$ of $\g$.

In \cite[Sect 14,15]{OO} Okounkov and Olshanski studied the ``special symmetrisation" map $\sigma:S(\g_{\mb C})\to U(\g_{\mb C})$. It maps the divided power $\mb Z$-form onto the Kostant $\mb Z$-form and after reduction mod $p$ one obtains the inverse of the map (*). Via the Chevalley restriction and Harish Chandra map, the restriction of $\sigma$ to the invariants corresponds to the map $\varphi$ from symmetric functions to ``shifted symmetric functions" which maps the Schur function $s_\lambda$ to the shifted Schur function $s^*_\lambda$. It is not clear to me how to obtain elementary formulas for the images of the symmetric functions $e_\lambda$, $h_\lambda$ and $p_\lambda$ under $\varphi$.\\
4.\ The Schur algebra $S(n,r)$ is isomorphic to $D^r$ as $G\times G$-module, so the centre of $S(n,r)$ is isomorphic to $(D^r)^G$ as vector spaces.
Computer calculations suggest that $(D^r)^G$ has dimension equal to the number of partitions of $r$ of length $\le n$, independent of $p$, and that a spanning set can be obtained by dividing each $h_\lambda$, $\lambda$ a partition of $r$, by the biggest possible integer in the $D(\gl_{n,\mb Z}^*)$ and then reducing mod $p$.\\ 
\end{rems}

\section{The algebras $D_s$ and $D_s(\g)$}\label{s.liealg}
\subsection{Group invariants}
Call a partition \emph{$s$-reduced} if it has $<p^s$ ones. To any partition we can associate an $s$-reduced partition by repeatedly replacing $p^s$ occurrences of 1 by $p^{s-1}$ occurrences of $p$. We will call two partitions \emph{$s$-equivalent} if their associated $s$-reduced partitions are the same. Call two elements of the symmetric group $S_r$ \emph{$s$-equivalent} if their cycle types are $s$-equivalent.
Recall the definition of $E_\pi$, $\pi\in S_r$, from Section~\ref{ss.invs}.

\begin{thm}\label{thm.invs1}
The sums of the $E_\pi$ over the $s$-equivalence classes belong to $D_s(\g)^G$, and when $n\ge r$ they form a basis of $D_s^r(\g)^G$.
\end{thm}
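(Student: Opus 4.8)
The plan is to work with the description of $(D^r(\g))^G$ as the image of the centre $(kS_r)^{S_r}$ under the map $\pi\mapsto E_\pi$, which for $n\ge r$ is an isomorphism onto $(D^r(\g))^G$ (Schur--Weyl duality, Section~\ref{ss.invs}). Under this identification the conjugacy class sums of $S_r$ (indexed by partitions $\lambda$ of $r$, or equivalently the divided $p_\lambda$) form a basis of $(D^r(\g))^G$. So the statement splits into two halves. First, I must show that the $s$-equivalence class sums of the $E_\pi$ lie in $D_s^r(\g)$; equivalently, under the iso $D(\g^*)\cong S(\g)^{*\,\mathrm{gr}}$ of Section~\ref{ss.polarisation_Zform}, that the corresponding symmetric $r$-linear function on $\g^{\oplus r}$ vanishes whenever $p^s$ of its arguments coincide. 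Second, I must show that these class sums are linearly independent and span all of $D_s^r(\g)^G$, i.e. that their number equals $\dim D_s^r(\g)^G$ and no nontrivial combination of the remaining basis vectors lands in $D_s^r$.

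For the containment, the key observation is the composition rule \eqref{eq.divpower_comp} (or directly Legendre's Theorem, as in the proof of Lemma~\ref{lem.divpower}): the divided power relation $p_1^{(p^s)}$ and, more to the point, the identity relating $p$ disjoint $1$-cycles with divided powers of $p$ means that replacing $p^s$ ones by $p^{s-1}$ copies of $p$ does not change the element modulo the ideal $I_s$. Concretely, $p_1^{p^s}\in I_s$ (it is literally $(x_{11}+\cdots+x_{nn})^{p^s}$, a $p^s$-th power), so $\mathrm{divided}\,p_{1^{p^s}}$, after accounting for the multinomial coefficients that are units by Lucas, is congruent mod $I_s$ to a $\mathbb Z$-combination of divided $p_\lambda$ in which all the ``$1^{p^s}$" has been converted into ``$p^{p^{s-1}}$". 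Iterating, two $s$-equivalent partitions give divided $p_\lambda$ that are equal as elements of $D_s(\g)=(A_s)^*$-type quotient, up to lower-``ones" corrections — so the natural class sum to pick is exactly the sum over an $s$-equivalence class, and one checks this sum is a $\mathbb Z$-combination landing in $D_s^r$. The cleanest route may be the dual/$r$-linear formulation: a symmetric $r$-linear function vanishing whenever $p^s$ arguments agree is exactly an element of $D_s^r$ (Section~\ref{ss.polarisation_Zform}, using that it suffices to test on basis vectors), and the image of a full conjugacy class sum of cycle type involving $p^s$ ones, evaluated with $p^s$ equal arguments, produces an $S_{p^s}$-orbit of terms of size divisible by $p$ — precisely the mechanism already cited in Section~\ref{ss.polarisation_Zform}. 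Care is needed because $p^s$ ones may interact with an existing $p$-part; this is where the passage to the $s$-reduced partition and a careful bookkeeping of the Lucas/Legendre coefficients is required.

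For the basis claim, the number of $s$-equivalence classes of partitions of $r$ equals the number of $s$-reduced partitions of $r$, and I expect this to equal $\dim D_s^r(\g)^G$ for $n\ge r$. One direction — that the class sums are linearly independent in $D_s^r(\g)^G\subseteq (D^r(\g))^G$ — follows since distinct $s$-equivalence classes are disjoint, so the class sums are sums over disjoint subsets of the $E_\pi$-basis of $(D^r(\g))^G$, hence linearly independent. The work is the spanning statement: given an arbitrary $G$-invariant $v=\sum_\lambda c_\lambda\,(\text{divided }p_\lambda)$ that happens to lie in $D_s^r$, I must show all $c_\lambda$ are constant on $s$-equivalence classes. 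I would prove this by pairing against $A_1(\g)$-type functionals, or more directly by testing $v$ on tuples of \emph{diagonal} matrices: the restriction of the divided $p_\lambda$ to the diagonal torus is (up to the canonical isomorphism) the corresponding divided power symmetric function in $n$ variables, and the condition of vanishing when $p^s$ diagonal arguments coincide forces exactly the $s$-reduction relations among the $p_\lambda$. In other words, reduce to the statement inside the divided power algebra of the Cartan, where $D_s$ is simply $D_s(k^n)$ with basis the monomials with exponents $<p^s$, and the invariants are the $S_n$-symmetric ones; there the $s$-equivalence class sums visibly form a basis (for $n\ge r$ every $s$-reduced partition of $r$ fits), and one checks the restriction map $(D_s^r(\g))^G\to (D_s^r(k^n))^{S_n}$ is injective for $n\ge r$ (it already is on all of $(D^r(\g))^G$, by CRT-type reasoning / Schur--Weyl).

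The main obstacle I anticipate is the containment half, specifically the combinatorial bookkeeping when an $s$-equivalence class mixes ones with $p$-parts: showing that the correct integer combination of the $E_\pi$ over a class — not just the naive sum — lands in $D_s^r$, and that the naive \emph{sum} is in fact the right combination. This amounts to verifying that the Lucas/Legendre coefficients appearing when one repeatedly applies \eqref{eq.divpower_prod2} and \eqref{eq.divpower_comp} to convert $p^s$ ones into $p^{s-1}$ copies of $p$ are all units mod $p$, so that the class sum (with coefficient $1$ on each $E_\pi$) is genuinely the image of a single element of $D_s^r$. The second obstacle is the precise dimension count, i.e. proving $\dim D_s^r(\g)^G$ equals the number of $s$-reduced partitions of $r$; reducing to the torus as above should handle it, but one must justify injectivity of the restriction-to-diagonal map on invariants, which for $n\ge r$ should follow from the already-established fact that the divided $p_\lambda$ (and their torus restrictions) are linearly independent.
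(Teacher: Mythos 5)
Your overall framing (reduce to the $E_\pi$-basis of $(D^r(\g))^G$ via Schur--Weyl, then prove a containment half and a spanning half) matches the paper, but both halves have genuine gaps, and the second rests on a false premise. For the containment, you appeal to the mechanism of Section~\ref{ss.polarisation_Zform}: that evaluating a conjugacy class sum with $p^s$ coincident arguments produces $\Sym(\Lambda)$-orbits of size divisible by $p$. This is exactly what fails, and its failure is the reason the theorem is about $s$-equivalence classes rather than single conjugacy classes. The paper's proof hinges on a lemma computing, for $\Lambda$ a set of $p^s$ positions with constant index pair, the value of each $\Sym(\Lambda)$-orbit sum $E_{\Sym(\Lambda)\cdot\pi}$: it is nonzero precisely when $\pi$ stabilises $\Lambda$ and $\pi|_\Lambda$ is either the identity (value $1$) or a product of $p^{s-1}$ disjoint $p$-cycles (value $-1$, via a Legendre-valuation computation showing $|\Sym(\Lambda)\cdot\pi|\equiv-1\bmod p$), and is zero in all other cases (orbit size divisible by $p$). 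Only with this $\pm1$ pairing in hand does one see that summing over an $s$-equivalence class --- which pairs each orbit of type $1^{p^s}$ on $\Lambda$ with its associated orbit of type $p^{p^{s-1}}$ --- kills all evaluations. You correctly flag this bookkeeping as ``the main obstacle'' but do not carry it out, and your suggested route through $p_1^{p^s}\in I_s$ and Lucas coefficients does not obviously produce the statement that the \emph{unweighted} class sum is the right element.

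The spanning half is where the proposal breaks down irreparably: you propose to restrict to tuples of diagonal matrices and assert that the restriction $(D^r(\g))^G\to(D^r(k^n))^{S_n}$ is injective for $n\ge r$ ``by CRT-type reasoning''. The paper's own example in Section~\ref{ss.invs} refutes this: for $p=2$ the element $\text{divided}\,p_3=((X,Y,Z)\mapsto\tr(XYZ)+\tr(YXZ))$ is a nonzero element of $(D_1^3)^G$ for $n\ge2$ which vanishes identically on triples of diagonal matrices (and likewise $e_4=e_2^{(2)}$ on diagonal matrices but not on $\gl_n$). The Chevalley restriction argument uses density of diagonalisable matrices and applies to \emph{polynomial} invariants; it does not transfer to the divided power algebra in characteristic $p$. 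Moreover the target $(D_s^r(k^n))^{S_n}$ does not even have the right dimension (for $r=p^s$ the partition $(p^s)$ is $s$-reduced but corresponds to no exponent-restricted monomial on the torus). The paper instead proves spanning by evaluating an arbitrary invariant $u=\sum c_{[\sigma]}E_{[\sigma]}$ against a specifically chosen tensor $x_{i_1j_1}\ot\cdots\ot x_{i_rj_r}$, with $i$ injective off $\Lambda$ and constant on $\Lambda$, so that the lemma forces the evaluation to equal $c_{[\pi']}-c_{[\pi]}$ for an associated pair $\pi,\pi'$; membership in $D_s^r$ then forces the coefficients to be constant on $s$-equivalence classes. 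You would need to replace your torus-restriction step with an argument of this kind.
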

\begin{proof} As we have seen in Section~\ref{ss.invs}, the $E_\pi$ span the $G$-invariants in $\g^{\otimes r}$ and they form a basis when $n\ge r$. So if $n\ge r$, then the sums of the $E_\pi$ over the conjugacy classes form a basis of $D^r(\g)^G=(\g^{\otimes r})^{G\times S_r}$.
The subspace $D_s^r(\g)$ consists of those elements $u$ of $D^r(\g)$ for which $(x_{i_1 j_1} \otimes\cdots\otimes x_{i_r j_r})(u)=0$ for all  $i,j\in\{1,\ldots,n\}^r$ such that $(i_l j_l)_{l\in\{1,\ldots,r\}}$ has at least $p^s$ repetitions. First we observe that $(x_{i_1 j_1} \otimes\cdots\otimes x_{i_r j_r})(E_\pi)=1$ if $j=i\circ\pi$ and 0 otherwise. So, if we put $E_S=\sum_{\sigma\in S}E_\sigma$ for $S\subseteq S_r$, then $(x_{i_1 j_1} \otimes\cdots\otimes x_{i_r j_r})(E_S)=|\{\sigma\in S\,|\,j=i\circ\sigma\}|$ mod $p$. We will now show the following:
\begin{lemnn}
Let $\Lambda\subseteq\{1,\ldots,r\}$ be a set of $p^s$ indices and let $i,j\in\{1,\ldots,n\}^r$ such that $(i_l,j_l)$ is constant for $l\in \Lambda$. We extend the permutations in $\Sym(\Lambda)$ to $\{1,\ldots,r\}$ by letting them fix the elements outside $\Lambda$. Let $\pi\in S_r$.
\begin{enumerate}[{\rm(i)}]
\item If $j\ne i\circ\pi$ or the centraliser $C_{\Sym(\Lambda)}(\pi)$ of $\pi$ in $\Sym(\Lambda)$ does not contain a $p^s$-cycle, then $(x_{i_1 j_1} \otimes\cdots\otimes x_{i_r j_r})(E_{\Sym(\Lambda)\cdot\pi})=0$.
\item If $j = i\circ\pi$ and $C_{\Sym(\Lambda)}(\pi)$ contains a $p^s$-cycle, then $\Lambda$ is $\pi$-stable, and $(x_{i_1 j_1} \otimes\cdots\otimes x_{i_r j_r})(E_{\Sym(\Lambda)\cdot\pi})$ equals\footnote{The reader may want to check that the centraliser of a product of $s$ disjoint $t$-cycles always contains an $st$-cycle.}\\
$\begin{cases}
1 & \text{if $\pi|_\Lambda=\id$,}\\
-1 & \text{if $\pi|_\Lambda$ is a product of $p^{s-1}$ disjoint $p$-cycles, and}\\
0 & \text{otherwise}.
\end{cases}$
\end{enumerate}
\end{lemnn}
\begin{proof}
Let $\Omega$ be the set of permutations $\pi$ with $j=i\circ\pi$. Note that $\Omega$ is $C_{S_r}(i)\times C_{S_r}(j)$-stable, so $\Sym(\Lambda)$ acts on $\Omega$ by conjugation.\\
(i). If $j\ne i\circ\pi$, then $j\ne i\circ\rho$ for all $\rho\in\Sym(\Lambda)\cdot\pi$. Therefore we have $(x_{i_1 j_1} \otimes\cdots\otimes x_{i_r j_r})(E_{\Sym(\Lambda)\cdot\pi})=0$. Now assume that $j=i\circ\pi$. Then
$\Sym(\Lambda)\cdot\pi\subseteq\Omega$ and $(x_{i_1 j_1} \otimes\cdots\otimes x_{i_r j_r})(E_{\Sym(\Lambda)\cdot\pi})=|\Sym(\Lambda)\cdot\pi|$ $\mod p$. So it suffices to show that $\Sym(\Lambda)\cdot\pi$ has size divisible by $p$. Now also assume that $C_{\Sym(\Lambda)}(\pi)$ does not contain a $p^s$-cycle. Then the same holds for $C_{\Sym(\Lambda)}(\rho)$ for all $\rho\in\Sym(\Lambda)\cdot\pi$. Now let $\sigma\in\Sym(\Lambda)$ be any $p^s$-cycle. Then $\la\sigma\ra$ is a $p$-group and all $\la\sigma\ra$-orbits on $\Sym(\Lambda)\cdot\pi$ have size divisible by $p$. So $\Sym(\Lambda)\cdot\pi$ has size divisible by $p$.\\
(ii). Since $j = i\circ\pi$,  we have $(x_{i_1 j_1} \otimes\cdots\otimes x_{i_r j_r})(E_{\Sym(\Lambda)\cdot\pi})=|\Sym(\Lambda)\cdot\pi|$ $\mod p$, as we have seen in the proof of (i). Let $\sigma\in C_{\Sym(\Lambda)}(\pi)$ be a $p^s$-cycle. Then $\Lambda$ is $\pi$-stable, since $\pi$ commutes with $\sigma$. So $\Lambda$ is a union of $\la\pi\ra$-orbits. These orbits are permuted transitively by $\la\sigma\ra$. So they all have the same size, $p^t$ say, $t\in\{0,\ldots,s\}$.

We have $|\Sym(\Lambda)\cdot\pi| = |\Sym(\Lambda)\cdot(\pi|_\Lambda)| = \frac{p^s!}{(p^t)^{p^{s-t}}p^{s-t}!}$, see \cite[I.B.3(1) p171]{Mac}. If we apply the $p$-adic valuation to this we get by Legendre's Theorem
$$\frac{p^s-1}{p-1} - ( tp^{s-t}+\frac{p^{s-t}-1}{p-1})\,.$$
If $t=0$, then $\pi|_\Lambda=\id$ and $|\Sym(\Lambda)\cdot\pi|=1$. Now assume $t=1$. Then $\pi|_\Lambda$ is a product of $p^{s-1}$ disjoint $p$-cycles. Clearly, $|\Sym(\Lambda)\cdot\pi|$ is nonzero mod $p$ (the $p$-adic valuation is zero), so we may assume that $p>2$. For each $a\in\{1,\ldots,p-1\}$ we count how often a $p$-power multiple of a number with remainder $a$ mod $p$ occurs in the list $p^s,p^s-1,\ldots,p^{s-1}+1$ of factors of $\frac{p^s!}{p^{s-1}!}$. It occurs as $a+bp$ for $b=p^{s-2},\ldots,p^{s-1}-1$, as $ap+bp^2$ for $b=p^{s-3},\ldots,p^{s-2}-1,\ldots,$ as $ap^{s-2}+bp^{s-1}$ for $b=1,\dots,p-1$ and finally as $ap^{s-1}$ for $a>1$ and as $p^s$ for $a=1$. That is in total $(p^{s-1}-p^{s-2}) + (p^{s-2}-p^{s-3}) +\cdots+ (p-1) + 1 = p^{s-1}$ times. The product of the nonzero numbers in the prime field is $-1$. So $|\Sym(\Lambda)\cdot\pi| = (-1)^{p^{s-1}}= -1 \mod p$.

Finally assume that $t\ge 2$. Then we have to show that $\frac{p^s-1}{p-1} > tp^{s-t}+\frac{p^{s-t}-1}{p-1}$, i.e. $p^s> tp^{s-t+1}- tp^{s-t}+ p^{s-t}$, i.e. that $p^t>tp-t+1$. This we do by induction on $t$. For $t=2$ this follows from the fact that $p>2-\frac{1}{p}$. Now assume it holds for $t$. Then we have $p\ge 2>1+\frac{1}{p^{t-1}}-\frac{1}{p^t}$. So $p^{t+1} > p^t+p-1 > tp-t+1+p-1 = (t+1)p-(t+1)+1$. So $\Sym(\Lambda)\cdot\pi$ has size divisible by $p$.
\end{proof}

So for $i$,$j$ and $\Lambda$ as in the lemma, the $\Sym(\Lambda)$-orbits $S$ for which the value $(x_{i_1 j_1} \otimes\cdots\otimes x_{i_r j_r})(E_S)$ is nonzero, leave $\Lambda$ stable and come in ``associated pairs": one has cycle structure $1^{p^s}$ on $\Lambda$ and value 1, the other has cycle structure $p^{p^{s-1}}$ on $\Lambda$ and value $-1$. When $T$ is an $s$-equivalence class, then $E_T$ can be written as a sum of certain $E_S$, $S$ a $\Sym(\Lambda)$-orbit and with any such orbit which has nonzero value the associated orbit is also present, so $(x_{i_1 j_1} \otimes\cdots\otimes x_{i_r j_r})(E_T)=0$. It follows that $E_T\in D_s(\g)$.

Now assume that $n\ge r$. Let $\Lambda\subseteq\{1,\ldots,r\}$ be a set of $p^s$ indices, assume $\pi \in S_r$ stabilises $\Lambda$, $\pi|_\Lambda$ is a product of $p^{s-1}$ disjoint $p$-cycles and $\pi'\in S_r$ is the identity on $\Lambda$ and equal to $\pi$ outside $\Lambda$. Denote the $S_r$-conjugacy class of $\sigma\in S_r$ by $[\sigma]$. Note that $[\pi]\ne[\pi']$. Recall from our discussion in Section~\ref{ss.invs} that the $E_{[\sigma]}$ form a basis of $D^r(\g)^G$. To prove the theorem it is enough to show that for any $\Lambda$, $\pi$ and $\pi'$ as above, and any $u\in D_s^r(\g)^G$, $E_{[\pi]}$ and $E_{[\pi']}$ occur with the same coefficient in $u$. Define $i \in\{1,\ldots,n\}^r$ by $i_l=l$ for $l\in\{1,\ldots,r\}\sm \Lambda$ and $i_l=\min(\Lambda)$ for $l\in \Lambda$. Put $j=i\circ\pi=i\circ\pi'$.
By our definition of $i$ and $j$, $j=i\circ\sigma$ implies $\sigma=\pi$ outside $\Lambda$. So the $\Sym(\Lambda)$-orbits of $\pi$ and $\pi'$ form the only associated pair (relative to $i,j$ and $\Lambda$) and the only $\Sym(\Lambda)$-orbit $S$ in $[\pi]$ resp $[\pi']$ for which $E_S$ has nonzero value is that of $\pi$ resp. $\pi'$. So for $u\in (D^r)^G$, written as a linear combination of the $E_{[\sigma]}$, $(x_{i_1 j_1} \otimes\cdots\otimes x_{i_r j_r})(u)$ equals the coefficient of $E_{[\pi']}$ minus the coefficient of $E_{[\pi]}$. This ends the proof of the theorem. 
\end{proof}


\begin{thm}\ \label{thm.invs2}
\begin{enumerate}[{\rm(i)}]
\item The sums of the divided $p_\lambda$'s over the $s$-equivalence classes of the partitions of $r$ belong to $(D_s^r)^G$, and when $n\ge r$ they form a basis of $(D_s^r)^G$.
\item The divided $h_\lambda$'s and the divided $e_\lambda$'s, both with $\lambda=1^{m_1}2^{m_2}\cdots$ such that $m_1<p^s$, belong to $(D_s^r)^G$, and when $n\ge r$ they form two bases of $(D_s^r)^G$.
\end{enumerate}
\end{thm}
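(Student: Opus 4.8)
The plan is to transfer the basis statement of Theorem~\ref{thm.invs1} from $D_s(\g)$ to $D_s$ via the polarisation/duality machinery of Sections~\ref{ss.polarisation_Zform} and~\ref{ss.invs}, and then read off the $h$- and $e$-versions from the $p$-version using the symmetric function identities already established. Concretely, recall that under the identification of Section~\ref{ss.invs} the $S_r$-equivariant map $\pi\mapsto E_\pi$ (for $\g^{\otimes r}$) matches the map $\pi\mapsto f_\pi$ (for $(\g^{\otimes r})^*$), and that the sum of the conjugacy class $[\pi]$ with $\pi$ of cycle type $\lambda$ maps, under the polarisation map $P:S^r(\g^*)\to D^r$, to the divided $p_\lambda$. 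Hence the element ``sum of the $E_\pi$ over an $s$-equivalence class'' in $D_s^r(\g)^G$ corresponds under $D_s^r(\g)\cong (A_s^r)^*\cong D_s^r$ (Section~\ref{ss.trunc}) to ``sum of the divided $p_\lambda$'s over the corresponding $s$-equivalence class of partitions'' in $(D_s^r)^G$. Since Theorem~\ref{thm.invs1} says the former form a basis of $D_s^r(\g)^G$ when $n\ge r$, and the displayed $G$-module isomorphisms are isomorphisms, part~(i) follows immediately; one should double-check that the $s$-equivalence relation on $S_r$ is exactly the pullback of $s$-equivalence on partitions of $r$, which is immediate from the definitions at the start of Section~\ref{s.liealg}.

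For part~(ii), first I would confirm membership: I need that divided $h_\lambda$ and divided $e_\lambda$ with $m_1<p^s$ actually lie in $D_s^r$, i.e.\ that the corresponding symmetric $r$-linear functions vanish when $p^s$ arguments coincide. By the reduction noted at the end of Section~\ref{ss.polarisation_Zform} it suffices to check this on $r$-tuples of the basis matrices $E_{ij}$, and in fact (via CRT and the fact that $h_i, e_i$ restrict to the usual symmetric functions on diagonal matrices) it reduces to a statement about the polarisations of $h_\lambda$, $e_\lambda$ as symmetric functions in $n$ variables. The cleanest route is: over $\mb Z$ the divided $p_\lambda$'s, divided $h_\lambda$'s and divided $e_\lambda$'s all span the same lattice (this was proved in Section~\ref{ss.invs}), and the change-of-basis matrix between, say, $(\text{divided }p_\lambda)_\lambda$ and $(\text{divided }h_\mu)_\mu$ has integer entries; one then needs that this integral change of basis is compatible with the $s$-equivalence filtration, i.e.\ that divided $h_\lambda$ with $m_1(\lambda)<p^s$ is an integral combination of divided $p_\mu$ with $\mu$ having $m_1(\mu)$ still $<p^s$ — equivalently that no ``$1^{p^s}\rightsquigarrow p^{p^{s-1}}$'' collapse is needed. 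This should follow from the triangularity of the $p$-to-$h$ (resp.\ $p$-to-$e$) transition with respect to a suitable order on partitions that does not increase the number of parts equal to $1$ beyond what is already present; granting it, divided $h_\lambda, e_\lambda$ with $m_1<p^s$ lie in $(D_s^r)^G$.

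Finally, for the basis claim in~(ii): the number of $s$-equivalence classes of partitions of $r$ equals the number of $s$-reduced partitions of $r$, which equals the number of partitions $\lambda=1^{m_1}2^{m_2}\cdots$ of $r$ with $m_1<p^s$ (the $s$-reduced partitions are precisely these). So the families in~(ii) have the right cardinality, $\dim(D_s^r)^G$, and it remains to show they are linearly independent, or equivalently spanning. For this I would argue that each divided $p_\lambda$-class-sum (a basis of $(D_s^r)^G$ by~(i)) is an integral combination of the divided $h_\mu$ with $m_1(\mu)<p^s$: using the $\mb Z$-form identity from Section~\ref{ss.invs} write divided $p_\nu = \sum_\mu c_{\nu\mu}\,(\text{divided }h_\mu)$ with $c_{\nu\mu}\in\mb Z$, sum over $\nu$ in an $s$-equivalence class, and observe that the $s$-equivalence class sum collapses the right-hand side onto terms with $m_1(\mu)<p^s$ (the terms with $m_1(\mu)\ge p^s$ must cancel, because the left side lies in $D_s$ and the $h_\mu$ with $m_1(\mu)\ge p^s$ are linearly independent modulo $D_s$). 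Hence the divided $h_\mu$, $m_1(\mu)<p^s$, span $(D_s^r)^G$ over $k$, and by the cardinality count they form a basis; applying the involution $\omega$ (which permutes the relevant partitions appropriately) gives the same for the divided $e_\mu$. The main obstacle, I expect, is precisely this last cancellation/triangularity point: making rigorous that passing from the $p$-basis to the $h$- or $e$-basis is ``compatible with the $s$-equivalence grading'', i.e.\ that the integral transition matrices restrict correctly to the sub-lattices indexed by partitions with $m_1<p^s$. Everything else is bookkeeping with the isomorphisms and cardinalities already in place.
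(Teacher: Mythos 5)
Part (i) of your proposal is correct and is essentially the paper's argument: Theorem~\ref{thm.invs1} is transported through $\g\cong\g^*$ and the Schur--Weyl dictionary under which the class sum $E_{[\pi]}$ corresponds to divided $p_\lambda$, and $s$-equivalence on $S_r$ is by definition the pullback of $s$-equivalence on partitions. Your reduction of part (ii) to the membership claim (linear independence from Section~\ref{ss.invs} plus the count of $s$-reduced partitions, which are exactly the partitions with $m_1<p^s$) also agrees with the paper; the subsequent ``spanning'' paragraph is redundant once membership, independence and cardinality are in hand, and in fact circular, since it needs the $h_\mu$ with $m_1(\mu)\ge p^s$ to be independent modulo $D_s$, which you only know after the other $h_\mu$'s are placed in $D_s$.

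The genuine gap is the membership claim itself, precisely where you flagged ``the main obstacle'', and the route you propose cannot be repaired as stated, for two reasons. First, even if divided $h_\lambda$ were an integral combination of divided $p_\mu$'s with $m_1(\mu)<p^s$, this would not put it in $D_s$: individual divided $p_\mu$'s with $m_1(\mu)<p^s$ are in general \emph{not} in $D_s$ (e.g.\ divided $p_{(p)}$ is not in $D_1$, since the sum of $f_\rho$ over the $(p-1)!$ $p$-cycles $\rho$ takes the value $-\tr(X^p)\ne0$ when all $p$ arguments equal $X=E_{11}$; only the class sum $\text{divided}\,p_{(p)}+\text{divided}\,p_{1^p}$ lies in $D_1$). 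By part (i), for $n\ge r$ an element $\sum_\mu c_\mu\,(\text{divided}\,p_\mu)$ lies in $D_s$ if and only if $c$ is constant on $s$-equivalence classes, which is a different condition from the vanishing you ask for. Second, the ``no collapse'' triangularity is simply false: $h_r=\sum_{\lambda\vdash r}\text{divided}\,p_\lambda$ with all coefficients equal to $1$, so divided $h_{(p^s)}$ (which has $m_1=0<p^s$) involves divided $p_{1^{p^s}}$ with coefficient $1$. It does lie in $D_s$, but only because constant coefficients are trivially constant on $s$-equivalence classes. (Your side remark that one may check vanishing on tuples of diagonal matrices is also unsafe: the Example in Section~\ref{ss.invs} shows restriction to the diagonal is not faithful here; the correct reduction is to tuples of the basis matrices $E_{ij}$.) The paper closes the membership step by a different mechanism altogether, namely the divided power operations: divided $h_\lambda=\prod_i\gamma_{m_i}(h_i)$, the factor $\gamma_{m_1}(e_1)$ lies in $D_s$ for $m_1<p^s$ by the addition formula \eqref{eq.divpower_sum}, and for $i\ge2$ the elements $e_i,h_i$ lie in the subspace $B=\bigoplus_{r\ge2}D_1^r$ of Lemma~\ref{lem.divpower}, which is stable under all $\gamma_j$. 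If you want to keep your symmetric-function approach you would instead have to prove that the coefficients of divided $h_\lambda$, $m_1(\lambda)<p^s$, in the divided $p$-basis are constant mod $p$ on $s$-equivalence classes, which is not obviously easier than Lemma~\ref{lem.divpower}.
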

\begin{proof}
(i). This is just a reformulation of Theorem~\ref{thm.invs1}, where we now work in the divided power algebra $D$ of $\g^*$ rather than $\g$. As we have seen in Section~\ref{ss.invs} divided $p_\lambda$ corresponds to the sum of the $E_\pi$ over the conjugacy class labelled by $\lambda$.\\ 
(ii). Since these two families are independent, see Section~\ref{ss.invs}, and have the same cardinality as the basis from part (i), it is enough to show that they lie in $D_s$. Recall that $D_s$ is spanned by the divided power monomials in the $x_{ij}$'s with exponents $<p^s$. Both the divided $h_\lambda$'s and the divided $e_\lambda$'s are products of divided powers with exponent $< p^s$ of $e_1=h_1$ and divided powers of elements in the span $B\subseteq D_1$ of the divided power monomials in the $x_{ij}$'s of degree $\ge 2$ and with exponents $<p$.
Using \eqref{eq.divpower_sum} it follows that $\gamma_i(e_1)\in D_s$ for all $i<p^s$. So it is enough to show that $B$ is stable under all divided powers $\gamma_i$, $i\ge 1$. This follows from Lemma~\ref{lem.divpower}.
\end{proof}

\begin{corgl}\ 
The monomials $\prod_{i=1}^ne_i^{(m_i)}$, $m_1<p^s$, belong to $D_s^G$.
Furthermore, for $r\le n$, those with $\sum_{i=1}^nim_i=r$ form a basis of $(D_s^r)^G$.
\end{corgl}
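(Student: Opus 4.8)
The plan is to deduce this directly from Theorem~\ref{thm.invs2}(ii). That theorem gives us that the divided $e_\lambda$'s with $\lambda = 1^{m_1}2^{m_2}\cdots$ and $m_1 < p^s$ form a basis of $(D_s^r)^G$ when $n \ge r$. Since by definition divided $e_\lambda = \frac{1}{u_\lambda}e_\lambda = \prod_{i\ge1}\frac{1}{m_i!}e_i^{m_i} = \prod_{i\ge1}e_i^{(m_i)}$, where $u_\lambda = \prod_i m_i!$, the divided $e_\lambda$'s are \emph{exactly} the monomials $\prod_i e_i^{(m_i)}$ with $m_1 < p^s$. So the second sentence of the corollary, for $r \le n$, is literally a restatement of Theorem~\ref{thm.invs2}(ii), once one notes that for a partition $\lambda$ of $r$ we have $\lambda_i \le r \le n$, so only the $e_i$ with $i \le n$ appear; writing $\lambda = 1^{m_1}\cdots n^{m_n}$ the condition that $\lambda$ is a partition of $r$ is $\sum_{i=1}^n i m_i = r$.

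First I would spell out this translation between the partition indexing and the exponent indexing, to make the identification $\mathrm{divided}\ e_\lambda = \prod_i e_i^{(m_i)}$ explicit. Then I would address the first sentence of the corollary, which asserts membership in $D_s^G$ (all degrees, no restriction on $n$) for \emph{any} tuple $(m_i)$ with $m_1 < p^s$ — including those with some $m_i \ne 0$ for $i > n$, and arbitrary total degree. For this the bound $n \ge r$ is irrelevant: membership of $\prod_i e_i^{(m_i)}$ in $D_s$ was already established inside the proof of Theorem~\ref{thm.invs2}(ii), where it is shown that $\gamma_j(e_1) \in D_s$ for $j < p^s$ (via \eqref{eq.divpower_sum}) and that each $e_i$ for $i \ge 2$ lies in the subspace $B \subseteq D_1$ of divided power monomials of degree $\ge 2$ with exponents $< p$, which by Lemma~\ref{lem.divpower} is stable under all divided powers $\gamma_j$. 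Hence $e_i^{(m_i)} \in D_s$ for every $i \ge 2$ and every $m_i \ge 0$, and $e_1^{(m_1)} \in D_s$ for $m_1 < p^s$; since $D_s$ is a subalgebra, the product lies in $D_s$. That these elements are $G$-invariant is clear, as the $e_i$ are, and $D_s^G = D_s \cap D^G$.

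The only genuine point to watch is that the argument for membership in $D_s$ must not implicitly use $n \ge r$: the decomposition of divided $e_\lambda$ as a product of $\gamma_j(e_1)$ and divided powers of elements of $B$ is valid for all $n$, so there is no obstacle here. I expect the main (and only) subtlety is bookkeeping — keeping the two indexing conventions straight and being careful that the first sentence is the unrestricted statement while the "basis" claim carries the hypothesis $r \le n$. No step requires a calculation beyond what is already in the proof of Theorem~\ref{thm.invs2}.
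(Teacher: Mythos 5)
Your proposal is correct and matches the paper's proof, which simply says the corollary is a reformulation of the statement about the $e_\lambda$'s in Theorem~\ref{thm.invs2}(ii); the identification $\text{divided }e_\lambda=\prod_ie_i^{(m_i)}$ and the observation that the membership claim in (ii) carries no hypothesis $n\ge r$ are exactly the points being invoked. Your extra care in separating the unrestricted membership statement from the basis statement is a faithful (if more verbose) expansion of the same argument.
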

\begin{proof}
This is just a reformulation of the statement about the $e_\lambda$'s in Theorem~\ref{thm.invs2}.
\end{proof}

\begin{rems}\label{rems.invs}
1.\ Let $\ov{A^G}$ denote the image of $A^G$ in $A_1=D_1$. By Veldkamp's Theorem for $k[\g]$, see Section~\ref{ss.res}, $\ov{A^G}$ is also the image of $A^\g$ in $A_1$. Furthermore, by \cite[Thms~8.2 or 8.4]{Pr} it has the monomials in the $e_i$ with exponents $<p$ as a basis. From Corollary~1 it is clear that when $n\ge2p$ the first degree where a ``new" invariant (i.e. not in $\ov{A^G}$) shows up in $A_1$ is $2p$. Indeed $(A_1^{2p})^G$ is the direct sum of the image of $(A^{2p})^G$ and $ke_2^{(p)}$. In the introduction of \cite{T} it is pointed out that $A_1^\g$ modulo $\ov{A^G}$ is isomorphic to $H^1(G_1,I_1)$, where $I_1$ is the ideal from Section~\ref{ss.trunc}. We note that conjecturally $A_1^\g$ and $A_1^G$ are the same, see the remarks after Conjecture~\ref{con.surjective_restriction}.\\
2.\ For $R$ a commutative ring, put $A_{1,R}=R[(x_{ij})_{1\le i,j\le n}]/(x_{ij}^p\,|\,1\le i,j\le n)$. We define $\varphi_p:A_{1,\mb Z}^+\to A_{1,\mb Z}$,  $A_{1,\mb Z}^+$ the truncated polynomials without constant term, by $\varphi_p(u)=\frac{u^p}{p}$. Then $\varphi_p$ descends to a map $\varphi_p:A_{1,\mb F_p}^+\to A_{1,\mb F_p}$. We have $A_{1,\mb F_p}=D_{1,\mb F_p}$, and when $u\in A_{1,\mb F_p}^+$ has no linear or constant term, then $\varphi_p(u)$ can also be computed in the divided power algebra $D_{\mb Z}$ by the same formula. Let $\ov u\in D_{\mb Z}$ be a lift of $u$ (without linear or constant term), let $m\ge0$ be an integer, let $m=\sum_{i=0}^ta_ip^i$ be the $p$-adic expansion of $m$ and write $m! = qp^{\nu_p(m!)}$, where $p$ does not divide $q$. By Legendre's Theorem we have
$\nu_p(m!)=\sum_{i=1}^ta_i\frac{p^i - 1}{p - 1}=\sum_{i=1}^ta_i\nu_p(p^i!)$. So $\ov u^{(m)}=\frac{1}{q}\prod_{i=1}^t(\frac{\ov u^{p^i}}{p^{\nu_p(p^i!)}})^{a_i} =\frac{1}{q}\prod_{i=1}^t(\varphi_p^i(\ov u))^{a_i}$ and therefore
$$u^{(m)}=\frac{1}{q}\prod_{i=1}^t(\varphi_p^i(u))^{a_i}\,.$$
In particular, any divided power monomial $\prod_{i=1}^ne_i^{(m_i)}$ with $m_1<p$ can be expressed as a monomial in $e_1,\ldots,e_n$ together with the iterates of $\varphi_p$ on $e_2,\ldots,e_n$.\\
\end{rems}

\subsection{Infinitesimal invariants}

\begin{lem}\ \label{lem.inf_invs}
Let $V=k^n$ be the natural module for $G$, let $r,t\ge1$ with $n\ge r,t$, and put $W=V^{\oplus r}\oplus(V^*)^{\oplus t}$. For $i\in\{1,\ldots,r\}$ and $j\in\{1,\ldots,t\}$ let $x_i:W\to V$ and $y_j:W\to V^*$ be the $i$-th vector component and $j$-th covector component function and $\la x_i,y_j\ra=((v,w)\mapsto w_j(v_i))\in k[W]^G$ be the bracket function. Then
\begin{enumerate}[{\rm(i)}]
\item the monomials in the $\la x_i,y_j\ra$ with exponents $<p$ form a basis of $k[W]^\g$ over $k[W]^p$, and
\item $((V^{\otimes r}\ot(V^*)^{\otimes t})^*)^\g=((V^{\otimes r}\ot(V^*)^{\otimes t})^*)^G$.
\end{enumerate}
\end{lem}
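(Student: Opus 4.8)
The statement is a classical-looking fact about invariants of vectors and covectors under $\GL_n$, restricted to the multilinear piece, together with a comparison between $G$-invariants and $\g$-invariants. The plan is to prove both parts by reducing to the First Fundamental Theorem (FFT) for $\GL_n$ acting on vectors and covectors and using a dimension/spanning count, and then to deduce the infinitesimal statement from the polynomial one.

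For part (i), the plan is as follows. First I would recall that $k[W]^G$ is generated by the bracket functions $\la x_i,y_j\ra$ by the FFT for $\GL_n$ on copies of $V$ and $V^*$ (valid since $n\ge r,t$, so there are no determinantal relations in play and in fact the bracket functions are algebraically independent). Then, by Veldkamp-type arguments — more precisely the statement, already invoked in Section~\ref{ss.res} for $k[\g]$, that $k[W]$ is generated by $k[W]^p$ and $k[W]^G$ — we get that $k[W]^\g$ is a free module over $k[W]^p$ with basis the monomials in the $\la x_i,y_j\ra$ with exponents $<p$. The key input here is that $k[W]$, being a polynomial ring, is free over $k[W]^p$ with basis the monomials in the coordinate functions with exponents $<p$, and that $k[W]^G=k[W]^\g\cap k[W]$-saturation considerations force the stated basis; alternatively one cites that for a smooth affine $G$-variety with $k[W]=k[W]^p\cdot k[W]^G$ one has $k[W]^\g = k[W]^p\cdot k[W]^G$ and the bracket functions being a polynomial generating set of $k[W]^G$ gives freeness. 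I would make sure $k[W]^p\cap k[W]^G$ is spanned by $p$-th powers of bracket monomials, which is immediate since the bracket functions are algebraically independent.

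For part (ii), the plan is a pure dimension count in the multilinear degree. The space $(V^{\otimes r}\ot(V^*)^{\otimes t})^*$ carries commuting actions of $G$ and of $S_r\times S_t$ (permuting the vector and covector factors), but we only need the $G$- versus $\g$-invariant comparison. By Schur--Weyl-type duality for mixed tensors (again using $n\ge r,t$), the $G$-invariants $(V^{\otimes r}\ot(V^*)^{\otimes t})^G$ are zero unless $r=t$, and when $r=t$ they have dimension $r!$, spanned by the ``contraction'' tensors indexed by bijections $\{1,\dots,r\}\to\{1,\dots,r\}$ — these are exactly the multilinear parts of products of $r$ bracket functions. On the other hand, the $\g$-invariants always contain the $G$-invariants, and I would argue the reverse inclusion by exhibiting that the multilinear part of $k[W]^\g$ from part (i) is spanned, in bidegree $(1^r,1^t)$, precisely by products $\prod_{l=1}^r\la x_l, y_{\sigma(l)}\ra$ for bijections $\sigma$ (the $p$-th powers contribute nothing in a multilinear degree since they live in degrees that are multiples of $p$). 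Thus the multilinear $\g$-invariants and the multilinear $G$-invariants coincide, which is the assertion.

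The main obstacle I expect is part (i): carefully justifying that $k[W]^\g$ is \emph{free} over $k[W]^p$ with the bracket monomials (exponents $<p$) as basis, rather than merely generated by them. One has to combine (a) the FFT giving $k[W]^G$ as a polynomial ring in the brackets, (b) the Veldkamp-type decomposition $k[W]=k[W]^p\cdot k[W]^G$, and (c) an argument that the products of a monomial in the coordinate functions (exponents $<p$) with a $p$-th power form a basis of $k[W]$ over $k[W]^p$ and that this basis is ``compatible'' with the $G$-invariant subring, so that one can intersect with $k[W]^\g$ and read off the claimed basis. This last compatibility is the delicate point and where I would spend the most care; once part (i) is in place, part (ii) is a short multilinear specialisation as sketched above.
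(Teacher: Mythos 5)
Your reduction of (ii) to (i) is exactly the paper's: in the multilinear multidegree $(1^r,1^t)$ no $p$-th powers can contribute, so the multilinear part of $k[W]^\g$ is spanned by multilinear bracket monomials, which are $G$-invariant. The problem is part (i), where your argument has a genuine gap. The First Fundamental Theorem only describes $k[W]^G$; the assertion you need is the inclusion $k[W]^\g\subseteq k[W]^p[\la x_i,y_j\ra]$, and this is not a formal consequence of the FFT plus algebraic independence of the brackets, nor of any general principle of the form ``$k[W]^\g=k[W]^p\cdot k[W]^G$ for a smooth affine $G$-variety'' (no such principle exists; the displayed identity $k[W]=k[W]^p\cdot k[W]^G$ in your sketch is false outright). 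A concrete obstruction, pointed out in Remark~\ref{rems.inf_invs}.2 of the paper: for $n=1$, $r=2$, $t=1$ the FFT holds and the two brackets are algebraically independent, yet $x_1^hx_2^{p-h}$ ($1<h<p$) is a $\g$-invariant not lying in the $k[W]^p$-algebra generated by the brackets. So any correct proof must use the hypothesis $n\ge r,t$ in an essential way beyond what your sketch does, and the ``delicate compatibility point'' you defer is in fact the entire content of the statement.

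The paper closes this gap by verifying the hypotheses of Skryabin's theorem on invariants of finite group schemes \cite[Thm~5.5]{Skr}: one computes the generic stabiliser to get $\fc_\g(W)=n^2-(n-r)(n-t)$, so that $\dim W-\fc_\g(W)=rt$ equals the number of brackets, and one checks that the locus where the differentials $d_{(v,w)}\la x_i,y_j\ra=f_j(v_i)+g_i(w_j)$ become linearly dependent has codimension $\ge2$ in $W$ (the differentials are independent as soon as the $v_i$ are linearly independent or the $w_j$ are, which is where $n\ge r,t$ enters). Skryabin's theorem then delivers both generation and freeness of $k[W]^\g$ over $k[W]^p$ with the restricted bracket monomials as basis. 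If you want to keep your architecture, you should replace the appeal to a ``Veldkamp-type decomposition'' by an appeal to such a theorem (Skryabin, or an equivalent codimension-two criterion) together with these two computations; as written, the central step of (i) is asserted rather than proved.
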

\begin{proof}
(i).\ We will verify the hypotheses of \cite[Thm 5.5]{Skr}. Using the notation in \cite{Skr} we have that $\fc_\g(W)=\dim\g-\min_{x\in W} \dim\g_x=n^2-(n-r)(n-t)=(r+t)n-rt$, since $n\ge r,t$,
and $\dim(W)-\fc_\g(W)=rt$. Let $U\subseteq W$ be the set of points $(v,w)\in W$ where the differentials $d_{(v,w)}\la x_i,y_j\ra$ are linearly dependent. We have $d_{(v,w)}\la x_i,y_j\ra = ((z,u) \mapsto\la v_i,u_j\ra+\la z_i,w_j\ra) = f_j(v_i)+g_i(w_j)\in W^*=(V^*)^{\oplus r}\oplus V^{\oplus t}$, where $f_j$ embeds $V$ in the $(r+j)$-th position in $W^*$ and $g_i$ embeds $V^*$ in the $i$-th position of $W^*$. It is now easy to check that the differentials of the $\la x_i,y_j\ra$ at $(v,w)$ will be independent if $v\in V^{\oplus r}$ is independent or if $w\in (V^*)^{\oplus t}$ is independent. Since $n\ge r,t$ we can indeed choose $v$ and $w$ like this, so we obtain that $\codim(W\sm U)\ge 2$.\\ 
(ii).\ This follows from (i), since $((V^{\otimes r}\ot(V^*)^{\otimes t})^*)^\g$ consists of the multilinear functions in $k[W]^\g$, so the $p$-th powers cannot be involved. 
\end{proof}

\begin{prop}\ \label{prop.inf_invs1}
Assume $r\le n$ and put $N=(p^s-1)n^2$. Then $(D^r)^\g=(D^r)^G$ and $(A_s^{N-r})^\g=(A_s^{N-r})^G$ for $r\le n$.
\end{prop}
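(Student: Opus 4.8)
The plan is to deduce both equalities from Lemma~\ref{lem.inf_invs}(ii) together with the identifications of $D^r$, $A_s$ with spaces of multilinear functions and with each other that were set up in Sections~\ref{ss.polarisation_Zform}--\ref{ss.invs}. First I would recall that $D^r=((\g^{\otimes r})^*)^{S_r}$ sits inside $(\g^{\otimes r})^*$ as the $S_r$-fixed points, and that $\g=\gl_n\cong V\otimes V^*$ as $G$-modules, where $V=k^n$. Hence $\g^{\otimes r}\cong V^{\otimes r}\otimes(V^*)^{\otimes r}$, and the $G$-action on both sides is the restriction of the $\GL(V)$-action (note the $\g$-action here is the one differentiated from $G$, so it acts by derivations). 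Applying Lemma~\ref{lem.inf_invs}(ii) with $t=r$ (legitimate since $n\ge r$) gives $((\g^{\otimes r})^*)^\g=((\g^{\otimes r})^*)^G$. Taking $S_r$-invariants of both sides — and observing that the $S_r$-action commutes with both the $G$- and the $\g$-actions — yields $(D^r)^\g=(D^r)^G$. One small point to check carefully is that the $\g$-action used in Lemma~\ref{lem.inf_invs} (the differential of the $\GL(V)$-action on $W=V^{\oplus r}\oplus(V^*)^{\oplus r}$, equivalently on $V^{\otimes r}\otimes(V^*)^{\otimes r}$) really does agree with the adjoint $\g$-action on $\g^{\otimes r}$ under the isomorphism $\g\cong V\otimes V^*$; this is standard but worth a sentence.

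For the second equality I would pass through the $G$-module isomorphisms recorded at the end of Section~\ref{ss.trunc} and in Section~\ref{ss.invs}. We have $D_s^r\cong(A_s^r)^*\cong A_s^{N-r}\otimes\det^{p^s-1}$ as $G$-modules, where $N=(p^s-1)n^2$; moreover $D_s^r$ is a subspace of $D^r$ (the span of the divided power monomials with exponents $<p^s$), so any $\g$-invariant in $A_s^{N-r}$ corresponds, after twisting by $\det^{1-p^s}$ and dualising, to a $\g$-invariant in $D_s^r\subseteq D^r$. But $\det^{p^s-1}$ is a one-dimensional $G$-module, and a $p$-th power as an element of $k[\g]$, hence killed by $\g$ (as $\g$ acts by derivations); so twisting by it does not affect whether an element is a $\g$-invariant or a $G$-invariant. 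Therefore $(A_s^{N-r})^\g=(A_s^{N-r})^G$ is equivalent to $(D_s^r)^\g=(D_s^r)^G$. Now $(D_s^r)^G=D_s^r\cap(D^r)^G$ and likewise $(D_s^r)^\g=D_s^r\cap(D^r)^\g$, and we have just shown $(D^r)^\g=(D^r)^G$, so intersecting with $D_s^r$ gives the claim.

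The only genuine subtlety — and the step I expect to be the main obstacle to write cleanly rather than hard mathematically — is bookkeeping the various module identifications and the twist by $\det^{p^s-1}$, making sure that "$\g$-invariant" is preserved under each of them. Since $\det^{p^s-1}$ corresponds to the $G$-semiinvariant $(\det)^{p^s-1}\in k[\g]$, which lies in $A^p$ and so is a $\g$-invariant, tensoring a module by it commutes with the functors $(-)^\g$ and $(-)^G$ in the sense needed; once this is spelled out, everything reduces to Lemma~\ref{lem.inf_invs}(ii) via Schur--Weyl duality exactly as in Section~\ref{ss.invs}. I would also remark that the hypothesis $r\le n$ is exactly what makes Lemma~\ref{lem.inf_invs} applicable (it needs $n\ge r,t$), which is why the conclusion is only asserted in that range.
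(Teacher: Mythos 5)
Your proposal follows essentially the same route as the paper: reduce the second equality to the first via the $G$-module isomorphism $A_s^{N-r}\cong D_s^r$ and the inclusion $D_s^r\subseteq D^r$, then embed $D^r$ into $(\g^{\otimes r})^*\cong(V^{\otimes r}\otimes(V^*)^{\otimes r})^*$ and invoke Lemma~\ref{lem.inf_invs}(ii) with $t=r$ (which is exactly where $r\le n$ is used). The one step that needs repair is your justification for discarding the twist by $\det^{p^s-1}$: since $p^s-1\equiv -1 \bmod p$, the function $(\det)^{p^s-1}$ is \emph{not} a $p$-th power and does not lie in $A^p$, so the claim that it is ``killed by $\g$ because it is a $p$-th power'' is false as stated. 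The correct (and simpler) reason, already recorded in Section~\ref{ss.invs}, is that the conjugation action of $G$ on $\g$ has determinant $1$ (it factors through $\SL(\g)$), so the $\GL(\g)$-character $\det^{p^s-1}$ restricts to the trivial character of $G$ and hence to the trivial $\g$-action; thus $D_s^r\cong A_s^{N-r}$ as $G$-modules with no twist at all, and the rest of your argument goes through verbatim.
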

\begin{proof}
Since $A_s^{N-r}\cong D_s^r$ as $G$-modules and $D_s^r$ is a $G$-submodule of $D^r$, it is enough to prove the first assertion. Put $V=k^n$. Since $D_s^r\subseteq D^r\subseteq(\g^{\otimes r})^*\cong (V^{\otimes r}\ot(V^*)^{\otimes r})^*$ it is enough to show that $((V^{\otimes r}\ot(V^*)^{\otimes r})^*)^\g$ equals \break $((V^{\otimes r}\ot(V^*)^{\otimes r})^*)^G$ for $r\le n$ which follows from Lemma~\ref{lem.inf_invs}(ii).
\end{proof}

One can form the divided power algebra of a vector space $V=k\ot_{\mb Z}V_{\mb Z}$ where $V_{\mb Z}$ is any free $\mb Z$-module. If $(x_i)_{i\in I}$ is a basis of $V_{\mb Z}$ one just has to work with monomials $\prod_{i\in I}x_i^{(m_i)}$ with all but finitely many $m_i$ zero. For a family of variables $(x_i)_{i\in I}$ we put $D((x_i)_{i\in I})=D(k\ot_{\mb Z}V_{\mb Z})$ and $D_s((x_i)_{i\in I})=D_s(k\ot_{\mb Z}V_{\mb Z})$ where $V_\mb Z$ is the free $\mb Z$-module on $(x_i)_{i\in I}$.

\begin{corgl}[to Theorem~\ref{thm.invs2}]
$$\lim_{\stackrel{\longleftarrow}{n}}({}_nD_s)^{\gl_n}=\lim_{\stackrel{\longleftarrow}{n}}({}_nD_s)^{\GL_n}=D_s(e_1)\ot D((e_i)_{i\ge2}),$$ where $D((e_i)_{i\ge2})$ is graded such that $e_i^{(m)}$ has degree $mi$, and the limit is in the category of graded $k$-algebras.
\end{corgl}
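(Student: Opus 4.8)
The plan is to pass to graded pieces and reduce everything to Corollary~1 to Theorem~\ref{thm.invs2} together with a description of the restriction maps on the $e_i$. Since the restriction maps ${}_nD_s\twoheadrightarrow{}_{n-1}D_s$ are homomorphisms of graded algebras (and carry $\gl_n$-invariants to $\gl_{n-1}$-invariants), the inverse limit in graded $k$-algebras is $\bigoplus_r\lim_{\stackrel{\longleftarrow}{n}}({}_nD_s^r)^{\GL_n}$, and likewise with $\GL_n$ replaced by $\gl_n$; moreover the right hand side $D_s(e_1)\ot D((e_i)_{i\ge2})$ has as its degree $r$ piece the span of the monomials $\prod_{i\ge1}e_i^{(m_i)}$ with $m_1<p^s$ and $\sum_i im_i=r$, which for fixed $r$ involve only $e_1,\ldots,e_r$. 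So it suffices to show, for each $r$, that $\lim_{\stackrel{\longleftarrow}{n}}({}_nD_s^r)^{\GL_n}$ and $\lim_{\stackrel{\longleftarrow}{n}}({}_nD_s^r)^{\gl_n}$ both coincide with this span, compatibly with multiplication.

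First I would check that under the restriction map ${}_nD_s\to{}_{n-1}D_s$ one has $e_i\mapsto e_i$ for $i\le n-1$ and $e_n\mapsto0$. This map is induced by the surjection $\gl_n^*\twoheadrightarrow\gl_{n-1}^*$ dual to the embedding $\gl_{n-1}\hookrightarrow\gl_n$; being defined over $\mb Z$ it commutes with the divided power maps $\gamma_j$, and the statement for the $e_i$ themselves holds because $e_i(X)=\tr(\wedge^iX)$ while the characteristic polynomial of $\left(\begin{smallmatrix}X&0\\0&0\end{smallmatrix}\right)$ is $t$ times that of $X$. Hence for $r\le n-1$ the transition map sends $\prod_{i=1}^re_i^{(m_i)}\mapsto\prod_{i=1}^re_i^{(m_i)}$. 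By Corollary~1 to Theorem~\ref{thm.invs2}, for every $n\ge r$ the monomials $\prod_{i=1}^re_i^{(m_i)}$ with $m_1<p^s$ and $\sum_i im_i=r$ form a basis of $({}_nD_s^r)^{\GL_n}$; combining this with the previous observation, for $n\ge r+1$ the transition map $({}_nD_s^r)^{\GL_n}\to({}_{n-1}D_s^r)^{\GL_{n-1}}$ sends this basis bijectively onto the corresponding basis and is therefore an isomorphism. So the system $\big(({}_nD_s^r)^{\GL_n}\big)_n$ is eventually constant, $\lim_{\stackrel{\longleftarrow}{n}}({}_nD_s^r)^{\GL_n}\cong({}_rD_s^r)^{\GL_r}$, with the distinguished basis carried along.

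Assembling over $r$, I would define a homomorphism of graded $k$-algebras $\Psi:D_s(e_1)\ot D((e_i)_{i\ge2})\to\lim_{\stackrel{\longleftarrow}{n}}({}_nD_s)^{\GL_n}$ by sending the formal generator $e_i$ to the compatible family of the actual elements $e_i\in{}_nD_s$ (which vanish for $i>n$). This family is compatible by the restriction computation above, and $\Psi$ is a well-defined algebra map because the actual $e_i$ satisfy $e_i^{(a)}e_i^{(b)}=\binom{a+b}{a}e_i^{(a+b)}$ by \eqref{eq.divpower_prod2}, with the $i=1$ product vanishing for $a+b\ge p^s$ by Lucas's and Legendre's Theorems, matching the defining relations of $D_s(e_1)$ exactly. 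By the second paragraph $\Psi$ is bijective in every degree --- in degree $r$ it matches the distinguished bases after projecting to any index $n\ge r$ --- so it is an isomorphism of graded algebras; this is the second equality.

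For the Lie algebra version, $D_s^r$ is a $G$-submodule, hence a $\g$-submodule, of $D^r$, so $({}_nD_s^r)^{\gl_n}={}_nD_s^r\cap({}_nD^r)^{\gl_n}$; by Proposition~\ref{prop.inf_invs1} one has $({}_nD^r)^{\gl_n}=({}_nD^r)^{\GL_n}$ for $n\ge r$, whence $({}_nD_s^r)^{\gl_n}=({}_nD_s^r)^{\GL_n}$ for all $n\ge r$. Thus the two inverse systems agree in terms and transition maps for $n\ge r$, so have the same limit, which is the first equality. The only thing requiring real care is the categorical bookkeeping: that the inverse limit in graded algebras is computed graded piece by graded piece and that the degreewise isomorphisms assemble into an algebra map, for which one just checks that multiplication of the $e_i^{(m_i)}$ is given by the same coordinatewise divided power rule on both sides. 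The genuine input --- that for $n\ge r$ the invariants have a basis independent of $n$, and the behaviour of $e_i$ under restriction --- is already in hand.
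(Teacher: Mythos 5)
Your proof is correct and takes essentially the same route as the paper, whose entire argument is the one-line citation of Proposition~\ref{prop.inf_invs1} (for the first equality) and Corollary~1 to Theorem~\ref{thm.invs2} (for the second). You have merely supplied, accurately, the bookkeeping the paper leaves implicit: the behaviour of the $e_i$ under restriction, the eventual constancy of the degreewise inverse systems, and the check that the degreewise identifications assemble into an isomorphism of graded algebras.
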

\begin{proof}
This follows from Proposition~\ref{prop.inf_invs1} and Corollary~1 to Theorem~\ref{thm.invs2}.
\end{proof}

\begin{corgl}[to Theorem~\ref{thm.invs2}]
Denote the centre of $\Dist(G_s)$ by $Z_s$ and for a subspace $W$ of $\Dist(G_s)$ denote by $F^rW$ the intersection of $W$ with the $r$-th filtration subspace of $\Dist(G_s)$. Assume that $r\le n$.
\begin{enumerate}[{\rm(i)}]
\item $F^rZ_s=F^r\Dist(G_s)^G=F^r\Dist(G_s)^\g$.
\item The dimension of $F^rZ_s$ is the number of partitions of $0,1,\ldots,r$ with $<p^s$ ones.
\end{enumerate}
\end{corgl}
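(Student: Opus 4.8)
The plan is to carry the problem over to the divided power side via the $G$-equivariant, filtration-preserving coalgebra isomorphism $\Dist(G_s)\cong D_s(\g)\cong D_s$ of Remark~\ref{rems.prelim}.3, and then to sandwich $F^rZ_s$ between the space of $G$-invariants and the space of $\g$-invariants, which agree in each degree $\le n$ by Proposition~\ref{prop.inf_invs1}.

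For (i) I would first recall that $Z_s$ is the fixed point space $\Dist(G_s)^{G_s}$ for the conjugation action of $G_s$ on $\Dist(G_s)$: for any cocommutative Hopf algebra $H$ the centre is the space of invariants for the adjoint action $h\rhd x=\sum h_{(1)}xS(h_{(2)})$, and for $H=\Dist(G_s)$ this adjoint action is the differential of the conjugation action of the infinitesimal group scheme $G_s$. Restricting the conjugation action of $G$ on $\Dist(G_s)$ along the subgroup schemes $G_1\subseteq G_s\subseteq G$, and using that $G$ is connected while $G_1$ has height one (so $G_1$-invariance is $\g$-invariance), one obtains the chain
$$\Dist(G_s)^G\ \subseteq\ Z_s=\Dist(G_s)^{G_s}\ \subseteq\ \Dist(G_s)^{G_1}=\Dist(G_s)^{\g}\,.$$
The isomorphism $\Phi$ of Remark~\ref{rems.prelim}.3 is $G$-equivariant, hence $\g$-equivariant after differentiating, and carries the $r$-th filtration subspace $F^r\Dist(G_s)$ onto $\bigoplus_{i=0}^{r}D_s^i$. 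So it turns the chain into $(D_s)^G\subseteq\Phi(Z_s)\subseteq(D_s)^{\g}$ with filtrations matched. Since the $G$- and $\g$-actions on $D_s$ are graded, all three spaces are graded, so it suffices to show $(D_s^i)^{\g}=(D_s^i)^G$ for every $i\le r\le n$; and as $D_s^i$ is a $G$-submodule of $D^i$, we get $(D_s^i)^{\g}=D_s^i\cap(D^i)^{\g}=D_s^i\cap(D^i)^G=(D_s^i)^G$ from Proposition~\ref{prop.inf_invs1}. Hence $F^r(D_s)^G=F^r(D_s)^{\g}$, which forces $F^rZ_s$ to equal both, i.e. $F^rZ_s=F^r\Dist(G_s)^G=F^r\Dist(G_s)^{\g}$.

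For (ii), part (i) and $\Phi$ identify $F^rZ_s$ with $\bigoplus_{i=0}^{r}(D_s^i)^G$, so $\dim F^rZ_s=\sum_{i=0}^{r}\dim(D_s^i)^G$. By Corollary~1 to Theorem~\ref{thm.invs2}, for $i\le n$ the monomials $\prod_{j=1}^{n}e_j^{(m_j)}$ with $m_1<p^s$ and $\sum_jjm_j=i$ form a basis of $(D_s^i)^G$; such exponent tuples correspond bijectively to partitions of $i$ with fewer than $p^s$ parts equal to $1$, the condition ``all parts $\le n$'' being automatic since $i\le n$. Summing over $i=0,\dots,r$ gives exactly the number of partitions of $0,1,\dots,r$ with $<p^s$ ones (with $i=0$ contributing the empty partition).

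I expect the only genuinely delicate point to be the first one: checking that the merely coalgebraic isomorphism of Remark~\ref{rems.prelim}.3 is enough to compute a centre. This works precisely because we access $Z_s$ as the $G_s$-invariants, a notion transported by the $G$-equivariance of $\Phi$ rather than by its (nonexistent) multiplicativity; once that is in place, the rest is routine bookkeeping with the filtration and with the degree-wise equality of $G$- and $\g$-invariants already established in Proposition~\ref{prop.inf_invs1}.
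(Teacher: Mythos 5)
Your proposal is correct and follows essentially the same route as the paper, whose proof is just the citation of Remark~\ref{rems.prelim}.3, Proposition~\ref{prop.inf_invs1} and Theorem~\ref{thm.invs1}: transport along the filtration-preserving $G$-equivariant isomorphism, sandwich $Z_s=\Dist(G_s)^{G_s}$ between the $G$- and $\g$-invariants, and count the basis (your use of the $e$-monomial basis from Corollary~1 to Theorem~\ref{thm.invs2} is an equivalent reformulation of the $s$-equivalence-class count from Theorem~\ref{thm.invs1}). Your explicit remark that the centre is accessed as adjoint ($G_s$-) invariants, so that only $G$-equivariance and not multiplicativity of the isomorphism is needed, is a correct and worthwhile elaboration of what the paper leaves implicit.
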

\begin{proof}
This follows from Remark~\ref{rems.prelim}.3, Proposition~\ref{prop.inf_invs1} and Theorem~\ref{thm.invs1}.
\end{proof}

\begin{rems}\label{rems.inf_invs}
1.\ The referee mentioned to me the following generalisation of Lemma~\ref{lem.inf_invs}. Call a polynomial dominant weight, i.e. a partition of length $\le n$, \emph{$p^s$-restricted} if $\lambda_i-\lambda_{i+1}<p^s$ for $i=1,\ldots,n-1$, and $\lambda_n<p^s$. Furthermore, call a semisimple $G$-module \emph{$p^s$-restricted} if all its irreducible submodules have \emph{$p^s$-restricted} highest weight. Then we have the following result.
\begin{propnn}
Let $M$ and $N$ be finite dimensional polynomial $G$-modules, homogeneous of degrees $r$ and $t$. If $r\le t$ and $N$ has $p^s$-restricted socle or if $r\ge t$ and $M$ has $p^s$-restricted head, then $\Hom_G(M,N)=\Hom_{G_s}(M,N)$.
\end{propnn}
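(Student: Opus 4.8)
The plan is to prove that the automatic inclusion $\Hom_G(M,N)\subseteq\Hom_{G_s}(M,N)$ (valid since $G_s\le G$) is an equality; as $G_s$ is normal in $G$ this amounts to saying that $G/G_s$ acts trivially on $\Hom_{G_s}(M,N)=(M^*\ot N)^{G_s}$, whose $G/G_s$-fixed points are exactly $\Hom_G(M,N)$. First I would dispose of one case: the transpose anti-automorphism $g\mapsto{}^{\mathrm t}g$ of $\GL_n$ preserves $G_s$ and induces the contravariant duality $M\mapsto M^\circ$ on polynomial modules; it preserves the homogeneous degree, fixes each simple module, interchanges socle and head, and gives $\Hom_G(M,N)\cong\Hom_G(N^\circ,M^\circ)$, compatibly for $G$ and $G_s$. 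Applying this to the pair $(N^\circ,M^\circ)$ reduces the case $r\ge t$ with $M$ of $p^s$-restricted head to the case $r\le t$ with $N$ of $p^s$-restricted socle, which I treat from now on.

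Secondly, a dévissage in $M$. The functor $\Hom_{G_s}(-,N)$ is left exact and $G/G_s$-equivariant, and an extension of trivial $G/G_s$-modules is trivial because $\Ext^1_{G/G_s}(k,k)=H^1(\GL_n,k)=0$ (Frobenius identifies $G/G_s$ with $\GL_n$). Since every composition factor of $M$ is some $L(\mu)$ with $\mu$ a partition of $r$, and sub/quotients of $M$ are again homogeneous of degree $r$, it suffices to prove that $\Hom_{G_s}(L(\mu),N)$ is a trivial $G/G_s$-module for each such $\mu$. For this I would invoke Steinberg's tensor product theorem: if $\mu=\mu^{(0)}+p^s\mu^{(1)}+p^{2s}\mu^{(2)}+\cdots$ with each $\mu^{(j)}$ a $p^s$-restricted partition, then $L(\mu)|_{G_s}\cong L_s(\mu^{(0)})^{\oplus d}$, where $L_s(\nu)$ is the simple $G_s$-module of $p^s$-restricted highest weight $\nu$ and $d=\prod_{j\ge1}\dim L(\mu^{(j)})$. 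Since a $G_s$-homomorphism out of a semisimple module has semisimple image, $\Hom_{G_s}(L(\mu),N)\cong\Hom_{G_s}(L_s(\mu^{(0)}),\soc_{G_s}N)^{\oplus d}$, so by Schur's lemma $\dim\Hom_{G_s}(L(\mu),N)=d\cdot[\soc_{G_s}N:L_s(\mu^{(0)})]$.

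The crux, and where the hypotheses genuinely enter, is the structural claim: for a homogeneous polynomial module $N$ of degree $t$ with $p^s$-restricted $G$-socle $\soc_GN=\bigoplus_iL(\lambda_i)$, one has $\soc_{G_s}N=\soc_GN$ as subspaces of $N$. The inclusion $\supseteq$ holds since each $L(\lambda_i)|_{G_s}=L_s(\lambda_i)$ is simple, and $\soc_{G_s}N$ is a $G$-submodule because $G_s\triangleleft G$. If the inclusion were proper there would be a $G$-submodule $T$ with $\soc_GN\subsetneq T\subseteq\soc_{G_s}N$ and $T/\soc_GN=L(\lambda)$ simple; then $\soc_GT=\soc_GN$, and since $T|_{G_s}$ is semisimple the extension $0\to\soc_GN\to T\to L(\lambda)\to0$ splits over $G_s$. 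By the five-term exact sequence for $G_s\triangleleft G$ (see \cite{Jan}) the kernel of the restriction map $\Ext^1_G(L(\lambda),L(\lambda_i))\to\Ext^1_{G_s}(L(\lambda),L(\lambda_i))$ is $H^1(G/G_s,\Hom_{G_s}(L(\lambda),L(\lambda_i)))$. But $\Hom_{G_s}(L(\lambda),L(\lambda_i))\ne0$ forces $\lambda^{(0)}=\lambda_i$, and then, $\lambda$ and $\lambda_i$ both being partitions of $t$, comparing sizes forces $\lambda=\lambda^{(0)}=\lambda_i$; so $\lambda$ is $p^s$-restricted, $\Hom_{G_s}(L(\lambda),L(\lambda_i))=k$ is a trivial $G/G_s$-module, and $H^1(G/G_s,k)=0$. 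Hence the extension class in $\bigoplus_i\Ext^1_G(L(\lambda),L(\lambda_i))$ vanishes, $T$ splits over $G$, and $L(\lambda)\subseteq\soc_GN$ — a contradiction. Homogeneity of $N$ is essential here: without it, a nonsplit $G$-extension of $L(\nu)$ by $L(\nu+p^s\kappa)$ that happens to split over $G_s$ would already enlarge the $G_s$-socle.

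Granting the structural claim, I finish as follows: $\Hom_{G_s}(L(\mu),N)\cong\Hom_{G_s}(L_s(\mu^{(0)}),\bigoplus_iL_s(\lambda_i))^{\oplus d}$ has dimension $d\cdot\#\{i:\lambda_i=\mu^{(0)}\}$. If some $\lambda_i$ equals $\mu^{(0)}$, then comparing the sizes of the partitions $\lambda_i$ (of $t$) and $\mu^{(0)}$ with $r\le t$ forces all $\mu^{(j)}$, $j\ge1$, to vanish, so $\mu=\mu^{(0)}$ is $p^s$-restricted, $d=1$, and this dimension equals $\#\{i:\lambda_i=\mu\}=[\soc_GN:L(\mu)]=\dim\Hom_G(L(\mu),N)$; otherwise both sides are $0$. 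In all cases $\dim\Hom_{G_s}(L(\mu),N)=\dim\Hom_G(L(\mu),N)$, so the inclusion $\Hom_G(L(\mu),N)\subseteq\Hom_{G_s}(L(\mu),N)$ is an equality, $\Hom_{G_s}(L(\mu),N)$ coincides with its $G/G_s$-fixed points, and the dévissage concludes the proof. I expect the structural claim of the third paragraph to be the main obstacle; the dévissage, the Steinberg reduction, and the dimension count should be routine, and as a sanity check the statement recovers Lemma~\ref{lem.inf_invs}(ii), granting that $V^{\otimes t}$ has $p$-restricted $\GL_n$-socle when $n\ge t$.
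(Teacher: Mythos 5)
Your proof is correct, and its skeleton coincides with the paper's: reduce to the alternative $r\le t$ with $N$ of $p^s$-restricted socle via the contravariant duality induced by transposition, then perform a d\'evissage in $M$ down to the case $M=L(\mu)$ simple, which is where $r\le t$ enters. The one genuine divergence is on the $N$-side. The paper first replaces $N$ by an injective indecomposable in the polynomial category of degree $t$ and then runs the induction on the composition factors of $M$ as a dimension count against the resulting exact sequences, handling the base case with the arguments of \cite[II.3.16]{Jan}. You instead keep $N$ arbitrary and prove directly that $\soc_{G_s}N=\soc_G N$, via the five-term inflation--restriction sequence, the vanishing of $H^1(G/G_s,k)$, and the observation that $\lambda^{(0)}=\lambda_i$ together with $|\lambda|=|\lambda_i|=t$ forces $\lambda=\lambda_i$. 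Both routes hinge on the same two pressure points --- homogeneity of $N$ pins down its $G_s$-socle, and $r\le t$ kills the higher Steinberg factors of the composition factors of $M$ in the final Hom count --- so the arguments are close cousins; yours has the mild advantage of not needing the structure of injectives for the Schur algebra, while the paper's makes the inductive step a purely formal exactness/dimension argument. I see no gaps.
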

This result is not hard to prove using standard facts about polynomial modules, see \cite[App A.1-3]{Jan}, contravariant duality, see \cite[II.1.16,2.12,2.13]{Jan} or \cite[2.7,5.4c]{Green}, and the arguments from \cite[II.3.16]{Jan}: One first reduces to the first alternative using contravariant duality, then one reduces to the case that $N$ is an injective indecomposable in the polynomial category, and then one proves the assertion by induction on the number of composition factors, where the assumption $r\le t$ is needed for the basis case that $M$ is irreducible.

From the above result one easily deduces Lemma~\ref{lem.inf_invs}. Indeed for $n\ge r$ we have $\Hom_G(V^{\otimes r},X)\cong X_\omega$ where $X_\omega$ is the $\omega$-weight space and $\omega=(1,\ldots,1,0,\ldots,0)$ ($r$ ones), see \cite[A.22,23]{Jan} or\cite[6.2g Rem~1, 6.4f, 6.4b]{Green}.
So $V^{\otimes r}$ has $p$-restricted head, and, by contravariant duality, $p$-restricted socle.\\
2.\ The conclusion of Lemma~\ref{lem.inf_invs} does not hold when $n<r$ or $n<t$. For example, if we have $n=1, r\ge2,t\ge1$ then $x_1^hx_2^{p-h}$, $1<h<p$ is a $\g$-invariant, but it doesn't belong to the $k[X]^p$-algebra generated by the $x_iy_j$.\\ 
3.\ I checked with the computer that $\dim (D^r)^\g >\dim (D^r)^G$ when $p=2, n=2$ or $n=3$, and $r=n+1$. In the first case I got $8>5$, in the second case $31>23$. When $p=3, n=2$, and $r=5$ I got $45>42$.

For $p = 2, n = 2, r = 3$ one can easily describe a $\g$-invariant in $D^r(\g)=(\g_n^{\otimes r})^{S_r}$ which is not a $G$-invariant. One can take the sum of the 3 $S_3$-conjugates of $(E_{11}+E_{22})\ot E_{12}\ot E_{12}$, i.e. $(E_{11}+E_{22})E_{12}^{(2)}$.\\
4.\ Take $n=2$. Let $H$ be the group of diagonal matrices in $G$ and let $\h$ be its Lie algebra. It is easy to check that the nonzero $H$-weights in $A_1$ are also nonzero for $\h$.
So the $H$-action on $A_1^\g$ is trivial. Of course the same holds for all $G$-conjugates of $H$. From the density of the semisimple elements in $H$ it now follows that $A_1^\g=A_1^G$.
This argument was mentioned to me by S.~Donkin. It is not difficult to show that $\dim(A_1^G)=\frac{3p^2-p}{2}$ and that $e_1=\tr, e_2=\det$ and $e_2^{(2)}$ generate $A_1^G$ by reducing to the $\spl_2$-case when $p>2$. 
\end{rems}

\subsection{The restriction property}
Recall that there is a $G$-equivariant isomorphism $D_1\cong A_1$ of graded algebras.

\begin{con}\label{con.surjective_restriction}
The algebras $({}_nA_1)_{n\ge1}$ have the infinitesimal restriction property.
\end{con}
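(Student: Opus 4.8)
\emph{Since the statement is a conjecture, what follows is a plan of attack together with an identification of the essential gap.}

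First I would record that the map in \eqref{eq.restriction_liealg} is an algebra homomorphism: it is the restriction to invariants of the surjection ${}_nA_1\twoheadrightarrow{}_{n-1}A_1$ that sends the coordinate functions in the last row and column to $0$. Hence it suffices to prove that, for every $n\ge2$, the algebra $({}_{n-1}A_1)^{\gl_{n-1}}$ is generated by elements lying in the image of \eqref{eq.restriction_liealg}. Let $R_m$ denote the subalgebra of $({}_mA_1)^{\gl_m}$ generated by $e_1,\dots,e_m$ together with the iterated divided Frobenius elements $\varphi_p^{\,j}(e_i)$ for $2\le i\le m$ and $j\ge1$; here each $\varphi_p^{\,j}(e_i)$ is defined because $e_i$ with $i\ge2$ has no linear or constant term, and it is, up to a nonzero scalar, the divided power $\gamma_{p^{j}}(e_i)=e_i^{(p^{j})}$, which is a $G$-invariant element of $D_1\cong A_1$ since $e_i\in B$ for $i\ge2$ and $B$ is stable under the $\gamma_i$ by Lemma~\ref{lem.divpower}, while $\gamma_{p^{j}}$ is $\GL(\g)$-equivariant. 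By Remark~\ref{rems.invs}.2 and Corollary~1 to Theorem~\ref{thm.invs2}, in degrees $\le n-1$ the invariants $({}_{n-1}A_1)^{\gl_{n-1}}$ coincide with the corresponding part of $R_{n-1}$. Using Proposition~\ref{prop.inf_invs1} and Corollary~1 and Corollary~2 to Theorem~\ref{thm.invs2}, one checks that the conjecture is in fact \emph{equivalent} to the assertion that $({}_mA_1)^{\gl_m}=R_m$ in all degrees, for every $m$.

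The key point of this reformulation is that the passage from $R_n$ to $R_{n-1}$ is transparent. Under \eqref{eq.restriction_liealg} one has $e_i\mapsto e_i$ for $i\le n-1$ and $e_n\mapsto0$, the classical compatibility of the $e_i$ with the block embedding $\gl_{n-1}\hookrightarrow\gl_n$. Moreover $\varphi_p$ is given by the universal formula $u\mapsto u^p/p$ on $\mb Z$-forms and the restriction maps are reductions of $\mb Z$-algebra homomorphisms, so they commute with $\varphi_p$ and with all of its iterates; hence $\varphi_p^{\,j}(e_i)$ restricts to $\varphi_p^{\,j}(e_i)$ for $i\le n-1$ and to $0$ for $i=n$. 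Thus \eqref{eq.restriction_liealg} maps $R_n$ onto $R_{n-1}$, and, granting $({}_mA_1)^{\gl_m}=R_m$, the conjecture follows at once.

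The outstanding step is to prove $({}_mA_1)^{\gl_m}=R_m$ in degrees exceeding the rank $m$; this is exactly what Corollary~1 to Theorem~\ref{thm.invs2} leaves untreated once $r>n$, and I expect it to be the main obstacle. A Veldkamp-type argument does not apply directly, because in $A_1$ every positive-degree $p$-th power vanishes, so the role played by $A^p$ in the classical Veldkamp theorem for $k[\g]$ (Section~\ref{ss.res}) must be taken over by the divided Frobenius $\varphi_p$. The natural attempt is an induction on degree: knowing the invariants through some degree, one would have to show that every ``new'' invariant in the next degree, modulo the image $\ov{A^G}$ of $A^G$ (which by Remark~\ref{rems.invs}.1 is spanned by the monomials in the $e_i$ with exponents $<p$), is accounted for by the $\varphi_p$-iterates of the $e_i$, $i\ge2$. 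Equivalently, one could try to analyse $A_1^\g/\ov{A^G}\cong H^1(G_1,I_1)$ (the isomorphism quoted from \cite{T} in Remark~\ref{rems.invs}.1) directly, or to reduce first to $\spl_n$, where the case $\spl_2$, $p>2$ is already understood (Remark~\ref{rems.inf_invs}.4). It would also be worthwhile to first settle the a priori weaker group version \eqref{eq.restriction_group} and then deduce the infinitesimal statement from the conjectural equality $A_1^\g=A_1^G$. I expect the difficulty to be precisely the absence of any structural handle on $A_1^\g$ above degree $n$.
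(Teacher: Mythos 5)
The statement you were asked to prove is a conjecture that the paper leaves open, so there is no proof to compare against; your proposal correctly recognises this, and your reformulation --- that the conjecture is equivalent to $({}_mA_1)^{\gl_m}=R_m$ in all degrees for every $m$, where $R_m$ is (by Remark~\ref{rems.invs}.2 together with Lucas's Theorem) exactly the span of the monomials $\prod_{i=1}^m e_i^{(m_i)}$ with $m_1<p$ --- is precisely the equivalence the paper itself records in the paragraph following the conjecture, and your observations that restriction commutes with $\varphi_p$ and kills $e_n$ are correct. The gap you isolate, namely establishing the spanning statement in degrees exceeding the rank, is exactly the open problem; the paper's only evidence for it is the case $n=2$ in Remark~\ref{rems.inf_invs}.4 and the computer verifications reported in Remarks~\ref{rems.res}.1 and \ref{rems.res}.2.
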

If this conjecture holds, then $A_1^\g=A_1^G$ by Proposition~\ref{prop.inf_invs1} and the monomials $\prod_{i=1}^ne_i^{(m_i)}$, $m_1<p$, span $A_1^\g$ by Corollary~1 to Theorem~\ref{thm.invs2}.
The point is that the restriction property allows us to reduce to the situation that $n$ is $\ge$ the degree $r$.
Conversely, if these monomials span $A_1^\g$, then $A_1^\g=A_1^G$ and the algebras $({}_nA_1)_{n\ge1}$ have the infinitesimal restriction property.
Note that by Remark~\ref{rems.prelim}.3 $A_1^\g=A_1^G$ implies that the centre $U^{[p]}(\g)^\g$ of $U^{[p]}(\g)$ is contained in the centre $\Dist(G)^G$ of $\Dist(G)$, see \cite[Lem~6.5]{Hab}.

\begin{rems}\label{rems.res}
1.\ We consider the surjectivity of the map $({}_NA_1)^{\GL_N}\to ({}_nA_1)^{\GL_n}$, $N>n$. By Remark~\ref{rems.inf_invs}.4 it is surjective for $n=2$, since the generators there lift to any $({}_NA_1)^{\GL_N}$. I also checked that it is surjective for $n=3$ and $p=2,3,5$, $n=4$ and $p=2$, $3$ (up to degree 8), $n=5$ and $p=2$ (up to degree 7), $p=3$ (up to degree 6).
This was done by checking in each of these cases that the monomials from Corollary~1 to Theorem~\ref{thm.invs2}, span $({}_nA_1^r)^{\GL_n}=({}_nD_1^r)^{\GL_n}$.\\ 
2.\ We consider the conjecture $A_1^\g=A_1^G$. By Remark~\ref{rems.inf_invs}.4 it holds for $n=2$. I checked it with the computer for $n=3$ and $p=2,3,5$, $n=4$ and $p=2$, $3$ (up to degree 7) and $5$ (up to degree 6), $n=5$ and $p=2$ (up to degree 5), $3$ (up to degree 5).
\\
3.\ The algebras $({}_nA_s)_{n\ge1}$, $s\ge 2$, don't have the group or Lie algebra restriction property. I checked this for the restriction ${}_3A_2^{10}\to{}_2A_2^{10}$ when $p=2$: $({}_2A_2^{10})^{\GL_2}$ is spanned by $x_{11}^2 x_{12}^3 x_{21}^3 x_{22}^2+x_{11}^3 x_{12}^2 x_{21}^2 x_{22}^3$ and $x_{11}^3 x_{12}^3 x_{21}^3 x_{22}+x_{11}^2 x_{12}^3 x_{21}^3 x_{22}^2+x_{11} x_{12}^3 x_{21}^3 x_{22}^3$, but the image in ${}_2A_2$ of $({}_3A_2^{10})^{\fb_3}$, $\fb_3$ the upper triangular matrices in $\gl_3$, is spanned by the first element.\\ 
4.\ The algebras $({}_nD_s)_{n\ge1}$, $s\ge2$, and $({}_nD)_{n\ge1}$ don't have the group or Lie algebra restriction property. By Proposition~\ref{prop.inf_invs1} and Theorem~\ref{thm.invs2}(i) it is enough to check that the dimension of the span of the sums of the divided $p_\lambda$'s is $<\dim({}_nD_s^r)^G$. First we consider the case $n=2$.
For $r=5$, $p=2$ I got $1<2$ for $s=2$ and $2<3$ for $s\ge3$,
for $r=8$, $p=3$ I got $4<5$ for $s\ge2$, and
for $r=14$, $p=5$ I got $7<8$ for $s\ge2$.
In the case $n=3$, $r=6$, $p=2$ I got $4<5$ for $s=2$ and $6<7$ for $s\ge3$.
\end{rems}

\subsection{Dimensions of some of the $A^r_s$}\label{ss.dims}
We give some dimensions that we calculated using a computer program. For $n=2$ the dimensions of the $A^r_s$ were always given as the coefficients of the polynomial $\frac{1-T^{p^s}}{1-T}\times\frac{1-T^{3(p^s-1)+2}}{1-T^2}\in\mb Z[T]$ which we calculate as $\frac{1-T^{p^s}}{1-T^2}\times\frac{1-T^{3(p^s-1)+2}}{1-T}$ for $p=2$.
The total dimension was always $p^{2s}+\frac{p^s(p^s-1)}{2}$. 
We checked the cases $s=2, p=2,3,5$ and $s=3,p=2$. For the case $s=1$, see Remark~\ref{rems.inf_invs}.4.

In the table below we give dimensions for $n\ge3$. Let $\ov{A^G}$ denote the image of $A^G$ in $A_s$. The first row gives the dimensions of the $(A^r_s)^G$, the second row gives the dimensions of the graded pieces of $\ov{A^G}$, and the third row, if it exists, gives the dimensions of the $(A^r_s)^\g$.
If the dimensions can be computed in all degrees, then the single number to the right gives the total dimension.

\begin{figure*}[htbp]
    \centering 
        \hspace*{-.5cm}\includegraphics[clip, trim=1.7cm 18.5cm 0.5cm 0.5cm, width=1.50\textwidth]{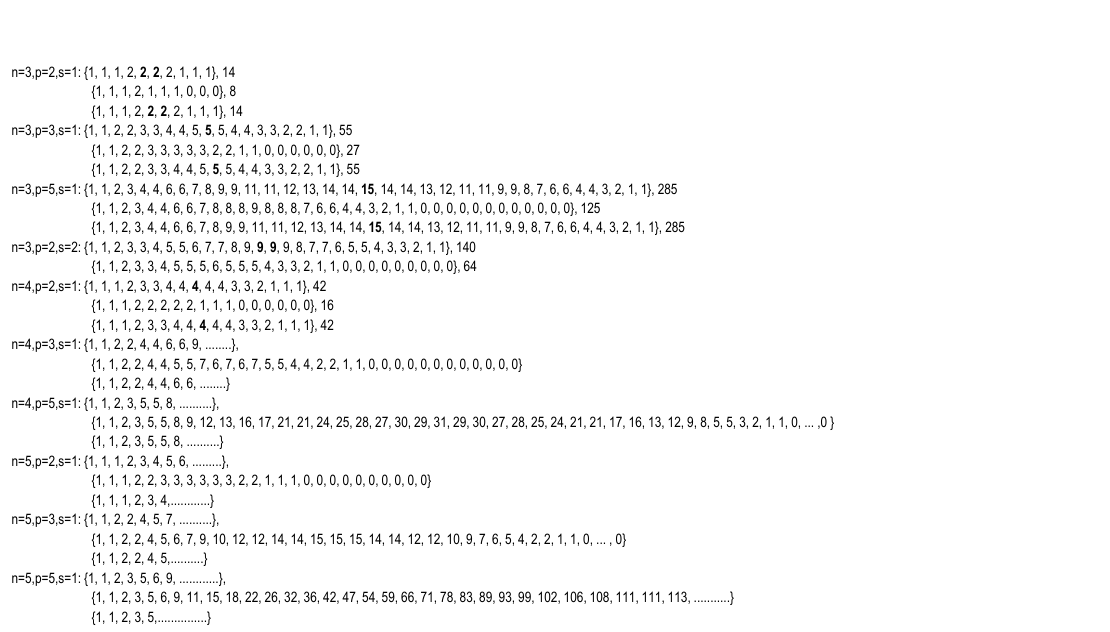}
    Dimensions of the invariants in some of the $A_s^r$
\end{figure*}

\newpage
\section{Several matrices} \label{s.several_matrices}
\newcommand{\bs}{\boldsymbol}
In this section we study the invariants in the algebras $D_s((\g^{\oplus m})^*)$.

\subsection{Conjugacy classes for the conjugation action of $S_\alpha$ on $S_r$}\label{ss.Young_conjugacy}
We recall some notation and results from \cite{Don} about conjugacy classes of a Young subgroup in $S_r$. For a finite sequence $\un i=(i_1,\ldots,i_t)$ of elements of $\{1,\ldots,m\}$ we define ${\rm Content}(\un i)$ to be the $m$-tuple whose $j$-th component is the number of occurrences of $j$ in $\un i$. We say that sequences $\un i$ and $\un j$ as above are equivalent if one is a cyclic shift of the other, we denote the equivalence class of $\un i$ by $[\un i]$ and we put $|[\un i]|=t$. We will call these equivalence classes \emph{cycle patterns}. Clearly, equivalent sequences have the same content, so the content function is also defined on cycle patterns. For $l\ge1$ we define the $l$-th power of $\un i$ by
$$[\un i]^l=[\underbrace{i_1,\ldots,i_t,\ldots,i_1,\ldots,i_t}_{\text{$l$ copies of $\un i$}}]\,.$$
We call a cycle pattern \emph{primitive} if it is not the $l$-th power of another cycle pattern for some $l\ge2$ and we denote the set of primitive cycle patterns by $\Phi$. Let $\mc P$ be the set of partitions. For $\lambda=(\lambda_1,\lambda_2,\ldots)\in\mc P$ we put $|\lambda|=\sum_{i\ge1}\lambda_i$ and we denote the length of $\lambda$, i.e. the number of nonzero parts of $\lambda$, by $l(\lambda)$. For a function $\bs\lambda:\Phi\to\mc P$ such that all but finitely many values are the empty partition we define the \emph{content} of $\bs\lambda$ to be $\sum_{b\in\Phi}|\bs\lambda(b)|{\rm Content}(b)$ and we denote the set of such functions with content $\alpha$ by $\Theta_\alpha$.

Now fix a composition $\alpha=(\alpha_1,\ldots,\alpha_m)$ of $r$. For $i\in\{1,\ldots,m\}$ put $\Delta_i=\{j\in\mb Z\,|\,\sum_{l=1}^{i-1}\alpha_l<j\le\sum_{l=1}^i\alpha_l\}$. Define $\zeta:\{1,\ldots,r\}\to\{1,\ldots,m\}$ by $\zeta(j)=i$ when $j\in\Delta_i$. Let $S_\alpha$ be the simultaneous stabiliser of the $\Delta_i$ in $S_r$. Note that $S_\alpha\cong S_{\alpha_1}\times\cdots\times S_{\alpha_m}$.
For a cycle $\sigma=(i_1,\ldots,i_t)\in S_r$ we put $[\sigma]=[\zeta(i_1),\ldots,\zeta(i_t)]$. We can associate to every $\pi$ with disjoint cycle decomposition $\pi=\prod_{j\in J}\sigma_j$ the multiset of cycle patterns $\la[\sigma_j]\,|\,j\in J\ra$. This multiset is equal to $\la b^{\bs\lambda(b)_i}\,|\,b\in\Phi,1\le i\le l(\bs\lambda(b))\ra$ for a unique $\bs\lambda\in\Theta_\alpha$ which we call the \emph{$S_\alpha$ cycle type} of $\pi$. Clearly, $\pi,\pi'\in S_r$ are $S_\alpha$-conjugate if and only if they have the same $S_\alpha$ cycle type.

\subsection{Partial polarisation}\label{ss.partial_polarisation}
Let $\alpha$, $r$, the $\Delta_i$, $S_\alpha$ and $\zeta$ be as in the previous section and let $V$ be a vector space over $k$. The algebra $S(V^{\oplus m})=S(V)^{\ot m}$ is $\mb Z^m$-graded and we denote the piece of degree $\alpha$ by $S^\alpha(V^{\oplus m})$. We apply analogous notation to the algebras $S((V^{\oplus m})^*)$, $D(V^{\oplus m})$ and $D_s(V^{\oplus m})$. Note that $S^\alpha(V^{\oplus m})\cong S^{\alpha_1}(V)\otimes\cdots\otimes S^{\alpha_m}(V)$, so $S^\alpha(V^{\oplus m})^*$ can be regarded as the $r$-linear functions $V^{\oplus r}\to k$ which are symmetric in each of the sets of positions $\Delta_i$, i.e. which are $S_\alpha$-invariants. For an integer $t\ge0$ let $\un 1_t$ denotes the all-one vector of length $t$.
The partial polarisation map $P_\alpha:S^\alpha((V^{\oplus m})^*)\to S^\alpha(V^{\oplus m})^*$ sends $f\in S^\alpha((V^{\oplus m})^*)$ to
the multi-homogeneous component of degree $(\un1_{\alpha_1},\ldots,\un1_{\alpha_m})$
of the $r$-variable polynomial function
$$(v^1_1,\ldots,v^1_{\alpha_1},\ldots,v^m_1,\ldots,v^m_{\alpha_m})\mapsto f(v^1_1+\cdots+v^1_{\alpha_1},\ldots,v^m_1+\cdots+v^m_{\alpha_m})\,.$$
If $F:V^{\oplus r}\to k$ is $r$-linear and
$f=((v_1,\ldots,v_m)\mapsto F(v_{\zeta(1)},\ldots,v_{\zeta(r)}))$, then
$$P_\alpha(f)=((v_1,\ldots,v_r)\mapsto\sum_{\sigma\in S_\alpha}F(v_{\sigma(1)},\ldots,v_{\sigma(r)}))\,.$$

As in Section~\ref{ss.polarisation_Zform} we obtain isomorphisms $D^\alpha((V^{\oplus m})^*)\cong S^{\alpha}(V^{\oplus m})^*$. Under these isomorphisms $D_s^\alpha((V^{\oplus m})^*)$ can be regarded as the $r$-linear functions $V^{\oplus r}\to k$ which are symmetric in each of the sets of positions $\Delta_i$ and which vanish when the arguments in $p^s$ positions within a $\Delta_i$ are the same. Furthermore, these isomorphisms are compatible with the isomorphism $D((V^{\oplus m})^*)\cong S(V^{\oplus m})^{*\rm gr}$ from Section~\ref{ss.polarisation_Zform}.

\subsection{Invariants in the algebra $D((\g^{\oplus m})^*)$}
We keep the notation of Section~\ref{ss.Young_conjugacy}.
For $f\in k[\g]^G$ and $b=[i_1,\ldots,i_t]$ a cycle pattern define $f_b\in k[\g^{\oplus m}]^G$ by
$$f_b(x_1,\ldots,x_m)=f(x_{i_1}\cdots x_{i_t})\,.$$
For $\bs\lambda\in\Theta_\alpha$ define $p_{\bs\lambda}=\prod_{b\in\Phi}p_{\bs\lambda(b),b}$, $e_{\bs\lambda}=\prod_{b\in\Phi}e_{\bs\lambda(b),b}$ and $h_{\bs\lambda}=\prod_{b\in\Phi}h_{\bs\lambda(b),b}$.
Furthermore define $u_{\bs\lambda}=\prod_{b\in\Phi}u_{\lambda(b)}$ and $z_{\bs\lambda}=\prod_{b\in\Phi}z_{\lambda(b)}$, and call
$\frac{1}{z_{\bs\lambda}}p_{\bs\lambda}$, $\frac{1}{u_{\bs\lambda}}h_{\bs\lambda}$, $\frac{1}{u_{\bs\lambda}}e_{\bs\lambda}\in S((\g_{\mb Q}^{\oplus m})^*)$ \emph{divided} $p_{\bs\lambda}$, $h_{\bs\lambda}$ and $e_{\bs\lambda}$.
As shown in \cite{Don} $z_{\bs\lambda}$ is the order of the centraliser in $S_\alpha$ of an element in $S_r$ of $S_\alpha$ cycle type $\lambda$.
Clearly, the divided $h_{\bs\lambda}$ and $e_{\bs\lambda}$ can be considered as elements of $D^\alpha((\g^{\oplus m})^*)^G$ by reduction mod $p$. We will now show that the same holds for the divided $p_{\bs\lambda}$ and that, for $n\ge r$, they form three bases of $D^\alpha((\g^{\oplus m})^*)^G$.
First we note that for $b\in\Phi$ the map $f\mapsto f_b$ can be defined over $\mb Q$ and then it maps divided power $\mb Z$-form into divided power $\mb Z$-form.
So for each $b\in\Phi$, the three families $(\frac{1}{u_{\lambda}}h_{\lambda,b})_{\lambda\in\mc P}$, $(\frac{1}{u_{\lambda}}e_{\lambda,b})_{\lambda\in\mc P}$ and $(\frac{1}{z_{\lambda}}p_{\lambda,b})_{\lambda\in\mc P}$ have the same $\mb Z$-span in $S((\g_{\mb Q}^{\oplus m})^*)$. But then the same holds for the three families $(\frac{1}{u_{\bs\lambda}}h_{\bs\lambda})_{\bs\lambda\in\Theta_\alpha}$, $(\frac{1}{u_{\bs\lambda}}e_{\bs\lambda})_{\bs\lambda\in\Theta_\alpha}$ and $(\frac{1}{z_{\bs\lambda}}p_{\bs\lambda})_{\bs\lambda\in\Theta_\alpha}$. In particular, $\frac{1}{z_{\bs\lambda}}p_{\bs\lambda}$ belongs to $D((\g_{\mb Z}^{\oplus m})^*)$.

Now let $\pi\in S_r$ be of $S_\alpha$ cycle type $\bs\lambda$. Then it is easy to see that
$p_{\bs\lambda}=((X_1,\cdots,X_m)\mapsto f_\pi(X_{\zeta(1)},\ldots,X_{\zeta(r)}))$, $f_\pi$ as in Section~\ref{ss.invs}. So as an element of $S^\alpha(\g_{\mb Q}^{\oplus m})^*$, via the partial polarisation map $P_\alpha$, it is
$$((X_1,\cdots,X_r)\mapsto\sum_{\sigma\in S_\alpha}f_\pi(X_{\sigma(1)},\ldots,X_{\sigma(r)}))=\sum_{\sigma\in S_\alpha}f_{\sigma\pi\sigma^{-1}}\,.$$
So under the $S_r$-equivariant isomorphism $\pi\mapsto f_\pi:kS_r\to((\g^{\otimes r})^*)^G$ the sum of the conjugacy class $[\pi]_{S_\alpha}$ corresponds to divided $p_{\bs\lambda}$.
So the divided $p_{\bs\lambda}$, $\bs\lambda\in\Theta_\alpha$, form a basis of $D^\alpha((\g^{\oplus m})^*)^G=((\g^{\otimes r})^*)^{G\times S_\alpha}$, and the same must then hold for the other two families.

\subsection{Invariants in the algebras $D_s((\g^{\oplus m})^*)$}
We keep the notation of Section~\ref{ss.Young_conjugacy}. Call $\bs\lambda\in\Theta_\alpha$ \emph{$s$-reduced} if $\bs\lambda([j])$ has $<p^s$ ones for all $j\in\{1,\ldots,m\}$. To $\bs\lambda\in\Theta_\alpha$ we can associate its \emph{$s$-reduced form} by repeatedly replacing $p^s$ occurrences of 1 in a $\bs\lambda([j])$ by $p^{s-1}$ occurrences of $p$. We will call two elements of $\Theta_\alpha$ \emph{$s$-equivalent} if they have the same $s$-reduced form. Call two elements of the symmetric group $S_r$ \emph{$(s,\alpha)$-equivalent} if their $S_\alpha$ cycle types are $s$-equivalent.

As in Section~\ref{s.liealg} we can now show that the sums of the $E_\pi$ over the $(s,\alpha)$-equivalence classes belong to $D_s(\g^{\oplus m})^G$, and when $n\ge r$ they form a basis of $D_s^\alpha(\g^{\oplus m})^G$. We only need the lemma in the proof of Theorem~\ref{thm.invs1} for sets $\Lambda$ that are contained in one of the $\Delta_i$. The proof of the theorem below is completely analogous to that of Theorem~\ref{thm.invs2} and we leave this to the reader as well.

\begin{thm}\label{thm.invs3}\ 
\begin{enumerate}[{\rm(i)}]
\item The sums of the divided $p_{\bs\lambda}$'s over the $s$-equivalence classes in $\Theta_\alpha$ belong to $D_s^\alpha((\g^{\oplus m})^*)^G$, and when $n\ge r$ they form a basis of $D_s^\alpha((\g^{\oplus m})^*)^G$.
\item The divided $h_{\bs\lambda}$'s and the divided $e_{\bs\lambda}$'s, both with $\bs\lambda\in\Theta_\alpha$ such that ${\bs\lambda}([j])$ has $<p^s$ ones for all $j\in\{1,\ldots,m\}$, belong to $D_s^\alpha((\g^{\oplus m})^*)^G$, and when $n\ge r$ they form two bases of $D_s^\alpha((\g^{\oplus m})^*)^G$.
\end{enumerate}
\end{thm}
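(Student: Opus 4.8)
The plan is to mirror exactly the strategy used for Theorem~\ref{thm.invs1} and Theorem~\ref{thm.invs2}, replacing conjugacy classes in $S_r$ by $S_\alpha$-conjugacy classes and using the combinatorics from Section~\ref{ss.Young_conjugacy}. For part (i), I would first work in the divided power algebra $D(\g^{\oplus m})$ rather than $D((\g^{\oplus m})^*)$, i.e.\ prove the analogue for $D_s(\g^{\oplus m})^G$ that the sums of the $E_\pi$ over the $(s,\alpha)$-equivalence classes form a basis of $D_s^\alpha(\g^{\oplus m})^G$ when $n\ge r$. As remarked in the excerpt, the key combinatorial Lemma in the proof of Theorem~\ref{thm.invs1} is only needed for sets $\Lambda$ of $p^s$ indices that are entirely contained in a single block $\Delta_i$; for such $\Lambda$, the permutations in $\Sym(\Lambda)$ lie in $S_\alpha$, and conjugating by them preserves $S_\alpha$-cycle type only up to the $s$-reduction move, so the same "associated pairs" argument (one orbit of cycle structure $1^{p^s}$ on $\Lambda$ with value $1$, the associated orbit of cycle structure $p^{p^{s-1}}$ on $\Lambda$ with value $-1$) shows that the $(s,\alpha)$-equivalence class sums $E_T$ lie in $D_s^\alpha(\g^{\oplus m})$. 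For linear independence and spanning, one uses the test covectors $x_{i_1j_1}\ot\cdots\ot x_{i_rj_r}$ exactly as before: given $\Lambda\subseteq\Delta_i$ and $\pi,\pi'$ differing only by collapsing a block of $p$-cycles on $\Lambda$ to the identity, choosing $i$ constant on $\Lambda$ and injective off $\Lambda$ detects the difference of the coefficients of $E_{[\pi]_{S_\alpha}}$ and $E_{[\pi']_{S_\alpha}}$ in any invariant in $D_s^\alpha$. Then translating through the partial polarisation isomorphism of Section~\ref{ss.partial_polarisation} and using that, under $\pi\mapsto f_\pi$, the $S_\alpha$-conjugacy class sum of $\pi$ corresponds to divided $p_{\bs\lambda}$ (as established just before Theorem~\ref{thm.invs3}) gives part (i).

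For part (ii), I would argue as in the proof of Theorem~\ref{thm.invs2}(ii). The divided $h_{\bs\lambda}$'s and divided $e_{\bs\lambda}$'s with $\bs\lambda([j])$ having $<p^s$ ones for all $j$ are products: a divided power $\gamma_{m_j}$ with $m_j<p^s$ applied to $e_{1,[j]}=h_{1,[j]}=(x\mapsto\tr(x_j))$ for each $j\in\{1,\ldots,m\}$, times divided powers of elements of the span $B$ of the divided power monomials in the matrix entry coordinates of degree $\ge2$ and exponents $<p$. By \eqref{eq.divpower_sum}, $\gamma_i$ of any linear form in the $x_{\cdot}^{(j)}$ with $i<p^s$ lands in $D_s$; and by Lemma~\ref{lem.divpower}, $B$ is stable under all $\gamma_i$, $i\ge1$. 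Hence both families lie in $D_s^\alpha((\g^{\oplus m})^*)$. Since by the discussion before Theorem~\ref{thm.invs3} the three families of divided $p_{\bs\lambda}$, $h_{\bs\lambda}$, $e_{\bs\lambda}$ have the same $\mb Z$-span in $S((\g_{\mb Q}^{\oplus m})^*)$, the reductions of the $s$-reduced divided $h_{\bs\lambda}$'s and $e_{\bs\lambda}$'s have the same cardinality as the basis from part (i), namely $|\{\bs\lambda\in\Theta_\alpha\ \text{$s$-reduced}\}|$, and they span the same subspace; hence they are themselves bases of $D_s^\alpha((\g^{\oplus m})^*)^G$ when $n\ge r$.

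The main obstacle I anticipate is verifying carefully that the combinatorial Lemma from the proof of Theorem~\ref{thm.invs1} goes through for the $S_\alpha$-setting: one must check that for $\Lambda\subseteq\Delta_i$ and $\pi\in S_r$, the value $(x_{i_1j_1}\ot\cdots\ot x_{i_rj_r})(E_{\Sym(\Lambda)\cdot\pi})$ still only depends on the cycle structure of $\pi|_\Lambda$ in the stated way (trivial, $p^{s-1}$ disjoint $p$-cycles, or orbit size divisible by $p$), and that the $p$-adic valuation estimates (Legendre's Theorem, the case analysis on the orbit size $p^t$) are unaffected since they involve only $\Sym(\Lambda)\cong S_{p^s}$ and not $\alpha$ at all. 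Once that is granted the rest of the argument is the same bookkeeping as in Section~\ref{s.liealg}, with conjugacy classes replaced by $S_\alpha$-conjugacy classes and the content/cycle-pattern labelling of Section~\ref{ss.Young_conjugacy} keeping track of which block each $\Lambda$ sits in; it is plausible the authors simply say "this is completely analogous" and leave it to the reader, as announced.
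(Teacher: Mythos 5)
Your proposal is correct and follows exactly the route the paper takes: the paper itself states that the proof is completely analogous to those of Theorems~\ref{thm.invs1} and~\ref{thm.invs2}, with the key lemma needed only for sets $\Lambda$ contained in a single block $\Delta_i$, and leaves the details to the reader. One small wording slip: conjugation by $\Sym(\Lambda)\le S_\alpha$ preserves the $S_\alpha$ cycle type exactly; it is the two \emph{associated} orbits (with $\pi|_\Lambda=\id$ versus $\pi|_\Lambda$ a product of $p^{s-1}$ disjoint $p$-cycles) whose $S_\alpha$ cycle types differ by the $s$-reduction move, which is what makes the equivalence-class sums vanish on the test covectors.
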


\setcounter{corgl}{0}
\begin{corgl}
The monomials $\prod_{1\le i\le n,b\in \Phi}e_{i,b}^{(m_{i,b})}$, $m_{1,[j]}<p^s$ for $j\in\{1,\ldots,m\}$, belong to $D^r_s((\g^{\oplus m})^*)^G$.
Furthermore, for $r\le n$, those with $\sum_{1\le i\le n,b\in \Phi}m_{i,b}|b|=r$ form a basis of $D^r_s((\g^{\oplus m})^*)^G$.
\end{corgl}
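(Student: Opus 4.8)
The plan is to deduce this corollary directly from part (ii) of Theorem~\ref{thm.invs3}, exactly as Corollary~1 to Theorem~\ref{thm.invs2} was deduced from Theorem~\ref{thm.invs2}(ii), so essentially no new work is needed beyond bookkeeping. First I would recall that, for $f=e_i$ and a cycle pattern $b=[j_1,\ldots,j_t]$, the element $e_{i,b}\in D((\g^{\oplus m})^*)$ is by definition $e_{\lambda,b}$ for the one-part partition $\lambda=(i)$, and that for $\bs\lambda\in\Theta_\alpha$ the divided $e_{\bs\lambda}$ is $\prod_{b\in\Phi}\frac{1}{u_{\bs\lambda(b)}}e_{\bs\lambda(b),b}$. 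Writing each partition $\bs\lambda(b)$ in the form $1^{m_{1,b}}2^{m_{2,b}}\cdots$, the divided $e_{\bs\lambda}$ is precisely the monomial $\prod_{1\le i,\,b\in\Phi}e_{i,b}^{(m_{i,b})}$, since $\frac{1}{u_{\bs\lambda(b)}}e_{\bs\lambda(b),b}=\prod_{i\ge1}\frac{1}{m_{i,b}!}e_{i,b}^{m_{i,b}}=\prod_{i\ge1}e_{i,b}^{(m_{i,b})}$ by \eqref{eq.divpower_prod2} and the fact that divided powers are multiplicative on products of distinct variables. The exponent constraint that $\bs\lambda([j])$ has $<p^s$ ones translates exactly into $m_{1,[j]}<p^s$ for each $j\in\{1,\ldots,m\}$, and the content/degree of $\bs\lambda$ is $\sum_{b\in\Phi}|\bs\lambda(b)|\,\mathrm{Content}(b)$, whose total is $\sum_{b,i}i\,m_{i,b}|b|$; restricting to the graded piece $D_s^r$ means imposing $\sum_{1\le i\le n,\,b\in\Phi}m_{i,b}|b|=r$.

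So the main body of the proof is just this dictionary: the map $\bs\lambda\mapsto(\text{exponent array }(m_{i,b}))$ is a bijection between $\{\bs\lambda\in\Theta_\alpha:\bs\lambda([j])\text{ has }<p^s\text{ ones}\}$ and $\{(m_{i,b})_{i\ge1,b\in\Phi}:\text{finitely many nonzero},\ m_{1,[j]}<p^s\}$, and under this bijection divided $e_{\bs\lambda}$ goes to the monomial $\prod e_{i,b}^{(m_{i,b})}$ while degree goes to $\sum m_{i,b}|b|$. One small point worth spelling out: the statement of the corollary restricts the index $i$ to $\{1,\ldots,n\}$, which is harmless because for $i>n$ one has $e_i=0$ on $\gl_n$ (the $i$-th elementary symmetric function in $n$ variables vanishes), so any $\bs\lambda$ with a part of size $>n$ contributes a zero element; thus one loses nothing by truncating $i$ at $n$, and when $r\le n$ no such truncation is even active since a partition of $r\le n$ has all parts $\le n$.

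Having established the dictionary, membership in $D_s^r((\g^{\oplus m})^*)^G$ of every monomial $\prod e_{i,b}^{(m_{i,b})}$ with $m_{1,[j]}<p^s$ is immediate from Theorem~\ref{thm.invs3}(ii), and the fact that those of the prescribed degree form a basis of $D_s^r((\g^{\oplus m})^*)^G$ when $r\le n$ is likewise immediate from the basis statement in Theorem~\ref{thm.invs3}(ii) restricted to the degree-$r$ graded piece. I do not anticipate a genuine obstacle here; the only thing requiring a little care is the identification of $\frac{1}{u_{\bs\lambda(b)}}e_{\bs\lambda(b),b}$ with $\prod_{i\ge1}e_{i,b}^{(m_{i,b})}$ in the divided power algebra, which rests on the multiplicativity of the $\mb Z$-form under products of the distinct "variables" $e_{i,b}$ together with \eqref{eq.divpower_prod2} to turn $\frac{1}{m!}e^m$ into $e^{(m)}$ — exactly the computation already used silently in Theorem~\ref{thm.invs2}(ii) and its Corollary~1. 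Hence the proof is a one-line reduction, and I would simply write: "This is a reformulation of the statement about the divided $e_{\bs\lambda}$'s in Theorem~\ref{thm.invs3}, via the bijection $\bs\lambda\leftrightarrow(m_{i,b})$ described above; the truncation to $i\le n$ is harmless since $e_i=0$ on $\gl_n$ for $i>n$."
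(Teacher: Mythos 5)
Your proposal is correct and is essentially the paper's own argument: the paper likewise disposes of this corollary as ``just a reformulation of the statement about the $e_{\bs\lambda}$'s in Theorem~\ref{thm.invs3}'', using that $D^r_s((\g^{\oplus m})^*)=\bigoplus_\alpha D_s^\alpha((\g^{\oplus m})^*)$ over compositions $\alpha$ of $r$ — which is exactly your dictionary $\bs\lambda\leftrightarrow(m_{i,b})$ together with the grading bookkeeping (your aside that $e_i=0$ on $\gl_n$ for $i>n$ is a harmless extra that the paper leaves implicit). The only blemish is that your degree computation gives $\sum_{i,b}i\,m_{i,b}|b|$ while you then restate the corollary's condition $\sum_{i,b}m_{i,b}|b|=r$ without comment; this inconsistency is inherited from the statement itself (compare the factor $i$ in $\sum_i im_i=r$ in Corollary~1 to Theorem~\ref{thm.invs2} and the grading $e_{i,b}^{(t)}\mapsto ti|b|$ in Corollary~2 to Theorem~\ref{thm.invs3}) and does not affect the validity of your reduction.
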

\begin{proof}
Given that $D^r_s((\g^{\oplus m})^*)$ is the direct sum of the $D_s^\alpha((\g^{\oplus m})^*)$, $\alpha\in\mb Z^m$ a composition of $r$, this is just a reformulation of the statement about the $e_\lambda$'s in Theorem~\ref{thm.invs3}.
\end{proof}

\begin{prop}\ \label{prop.inf_invs2}
Assume $r\le n$. Then $D^r_s((\g^{\oplus m})^*)^\g=D^r_s((\g^{\oplus m})^*)^G$.
\end{prop}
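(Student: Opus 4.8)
The plan is to reduce this statement, exactly as in the proof of Proposition~\ref{prop.inf_invs1}, to the statement that multilinear $\g$-invariants of vectors and covectors are automatically $G$-invariant, i.e.\ to Lemma~\ref{lem.inf_invs}(ii). The point is that $D^r_s((\g^{\oplus m})^*)$ is a $G$-submodule of $D^r((\g^{\oplus m})^*)$, so it suffices to prove the equality for the larger space $D^r((\g^{\oplus m})^*)^\g=D^r((\g^{\oplus m})^*)^G$, since then both $\g$- and $G$-invariants in the submodule are obtained by intersecting with the same subspace.

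First I would use the identification from Section~\ref{ss.partial_polarisation}: $D^r((\g^{\oplus m})^*)$ is the direct sum over compositions $\alpha$ of $r$ of the spaces $D^\alpha((\g^{\oplus m})^*)\cong S^\alpha(\g^{\oplus m})^*$, and each such space sits inside $(\g^{\otimes r})^*$ as the $S_\alpha$-invariants. Concretely $D^\alpha((\g^{\oplus m})^*)\subseteq (\g^{\otimes r})^*$. Writing $\g=\End(k^n)\cong V\ot V^*$ with $V=k^n$, we get $\g^{\otimes r}\cong V^{\otimes r}\ot (V^*)^{\otimes r}$ as $G$-modules, so $D^r((\g^{\oplus m})^*)$ embeds $G$-equivariantly (and $\g$-equivariantly) into $((V^{\otimes r}\ot(V^*)^{\otimes r})^*)$.

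Then I would invoke Lemma~\ref{lem.inf_invs}(ii), which for $r\le n$ gives $((V^{\otimes r}\ot(V^*)^{\otimes r})^*)^\g=((V^{\otimes r}\ot(V^*)^{\otimes r})^*)^G$. Intersecting with the $G$-stable (hence $\g$-stable) subspace $D^r_s((\g^{\oplus m})^*)$ yields $D^r_s((\g^{\oplus m})^*)^\g=D^r_s((\g^{\oplus m})^*)^G$, as required. There is essentially no obstacle here: the argument is a verbatim transcription of the proof of Proposition~\ref{prop.inf_invs1}, the only new input being the identification of the graded pieces of the several-matrices divided power algebra with $S_\alpha$-invariant multilinear functions, which was already set up in Section~\ref{ss.partial_polarisation}. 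The hypothesis $r\le n$ is exactly what is needed to apply Lemma~\ref{lem.inf_invs}, and (by Remark~\ref{rems.inf_invs}.2) it is genuinely necessary. One could also replace the appeal to Lemma~\ref{lem.inf_invs}(ii) by the referee's Proposition in Remark~\ref{rems.inf_invs}.1, noting that $V^{\otimes r}$ has $p$-restricted head and socle, but the direct route via the lemma is shortest.
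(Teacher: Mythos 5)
Your argument is correct and is essentially the paper's own proof: the paper likewise observes that each $D_s^\alpha((\g^{\oplus m})^*)$ is a $G$-submodule of $(\g^{\otimes r})^*\cong(V^{\otimes r}\ot(V^*)^{\otimes r})^*$ and then concludes exactly as in Proposition~\ref{prop.inf_invs1}, i.e.\ via Lemma~\ref{lem.inf_invs}(ii). The only difference is that you spell out the intersection-with-a-submodule step, which the paper leaves implicit.
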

\begin{proof}
For $\alpha$ a composition of $r$ we have $D_s^\alpha((\g^{\oplus m})^*)$ is a $G$-submodule of $(\g^{\otimes r})^*$, so this
follows as in the proof of Proposition~\ref{prop.inf_invs1}.
\end{proof}

\begin{corgl}
$$\lim_{\stackrel{\longleftarrow}{n}}D_s((\gl_n^{\oplus m})^*)^{\gl_n}=\lim_{\stackrel{\longleftarrow}{n}}D_s((\gl_n^{\oplus m})^*)^{\GL_n}=D_s((e_{1,[j]})_{1\le j\le m})\ot D((e_{i,b})_{i\text{\,or\,}|b|\ge2}),$$ where $D((e_{i,b})_{i\text{\,or\,}|b|\ge2})$ is graded such that $e_{i,b}^{(t)}$ has degree $ti|b|$, and the limit is in the category of graded $k$-algebras.
\end{corgl}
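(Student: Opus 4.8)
\end{corgl}

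\begin{proof}
The plan is to deduce this formally from Proposition~\ref{prop.inf_invs2} and Corollary~1 to Theorem~\ref{thm.invs3}, exactly as Corollary~2 to Theorem~\ref{thm.invs2} was deduced from Proposition~\ref{prop.inf_invs1} and Corollary~1 to Theorem~\ref{thm.invs2}. Since an inverse limit of graded algebras is formed degree by degree, I would fix a homogeneous degree $r$ and look at the inverse system of degree-$r$ pieces. For $n\ge r$, Proposition~\ref{prop.inf_invs2} gives $D^r_s((\gl_n^{\oplus m})^*)^{\gl_n}=D^r_s((\gl_n^{\oplus m})^*)^{\GL_n}$, so the $\gl_n$- and $\GL_n$-systems coincide from $n=r$ on and therefore have the same limit; this gives the first equality.

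Next I would analyse the transition maps. The embedding $X\mapsto\left(\begin{smallmatrix}X&0\\0&0\end{smallmatrix}\right):\gl_{n-1}\hookrightarrow\gl_n$ induces the restriction map ${}_nD((\gl_n^{\oplus m})^*)\twoheadrightarrow {}_{n-1}D((\gl_{n-1}^{\oplus m})^*)$, a morphism of graded algebras compatible with the divided powers $\gamma_i$. A product $X_{i_1}\cdots X_{i_t}$ of matrices of the above block shape is again of that shape, and $e_i$ of such a block matrix equals $e_i$ of its upper-left block when $i\le n-1$ and is $0$ when $i=n$; so, restricting the corresponding polynomial functions, this map sends $e_{i,b}$ to $e_{i,b}$ (now at rank $n-1$) for $i\le n-1$ and to $0$ for $i=n$, and hence sends $e_{i,b}^{(t)}$ to $e_{i,b}^{(t)}$ for $i\le n-1$ and to $0$ (when $t\ge1$) for $i=n$. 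By Corollary~1 to Theorem~\ref{thm.invs3}, for $n\ge r$ the monomials $\prod_{i,b}e_{i,b}^{(m_{i,b})}$ with $m_{1,[j]}<p^s$ for all $j$ and of total degree $r$ form a basis of $D^r_s((\gl_n^{\oplus m})^*)^{\GL_n}$; since $e_{i,b}^{(m)}$ has degree $mi|b|$, every generator $e_{i,b}$ occurring in such a monomial satisfies $i\le r\le n$, so no such basis monomial is killed by further restriction. Thus for $n\ge r$ the transition maps in degree $r$ are bijections identifying these bases, and the degree-$r$ part of the limit has as basis the monomials $\prod_{i,b}e_{i,b}^{(m_{i,b})}$ of total degree $r$ with $m_{1,[j]}<p^s$ for all $j$.

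Summing over $r$ then identifies the inverse limit, as a graded vector space, with $D_s((e_{1,[j]})_{1\le j\le m})\ot D((e_{i,b})_{i\text{\,or\,}|b|\ge2})$, graded so that $e_{i,b}^{(t)}$ lies in degree $ti|b|$, the $p^s$-truncation affecting precisely the degree-one generators $e_{1,[j]}=((X_1,\dots,X_m)\mapsto\tr X_j)$ (the case $b=[j]$). To see this is an isomorphism of graded algebras I would compare structure constants: the identity \eqref{eq.divpower_prod2} holds in every ${}_nD$, hence in the limit, and a product of divided powers of distinct generators $e_{i,b}$ is again, up to a scalar, one of the basis monomials above; here one invokes Lucas's Theorem to see that $\binom{a+b}{a}\equiv0$ mod $p$ whenever $a+b\ge p^s$ with $0\le a,b<p^s$, so that the $p^s$-truncation on the exponents of the $e_{1,[j]}$ is consistently inherited by the limit. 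I do not expect a serious obstacle here: all the real content sits in Proposition~\ref{prop.inf_invs2} and Corollary~1 to Theorem~\ref{thm.invs3}, and the one point needing a little care is exactly this bookkeeping — that the transition maps carry basis monomials to basis monomials or to $0$, and that no truncation relation is lost on passing to the limit.
\end{proof}
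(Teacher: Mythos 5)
Your proposal is correct and follows the same route as the paper, whose proof is simply the one-line citation of Proposition~\ref{prop.inf_invs2} (to identify the $\gl_n$- and $\GL_n$-invariants for $n\ge r$) and Corollary~1 to Theorem~\ref{thm.invs3} (to get the monomial basis); you merely make explicit the routine verifications — that the limit is computed degreewise, that restriction kills $e_{n,b}$ and fixes the remaining generators so the transition maps are bijections in degree $r$ for $n\ge r$, and that the truncation relations $e_{1,[j]}^{(a)}e_{1,[j]}^{(b)}=\binom{a+b}{a}e_{1,[j]}^{(a+b)}=0$ for $a,b<p^s\le a+b$ persist in the limit. These details are all sound and are exactly what the paper leaves to the reader.
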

\begin{proof}
This follows from Proposition~\ref{prop.inf_invs2} and Corollary~1 to Theorem~\ref{thm.invs3}.
\end{proof}

\section{Vectors and covectors}\label{s.vecs_and_covecs}
Let $V=V_n=k^n$ be the natural module for $G$, let $m_1,m_2\ge0$ be integers and put $W=W_n=V^{\oplus m_1}\oplus(V^*)^{\oplus m_2}$. In this section we study the invariants in the algebras $D_s(W^*)$. For $i\in\{1,\ldots,m_1\}$ and $j\in\{1,\ldots,m_2\}$ let $x_i:W\to V$ and $y_j:W\to V^*$ be the $i$-th vector component and $j$-th covector component function and $\la x_i,y_j\ra=((v,w)\mapsto w_j(v_i))\in k[W]^G$ be the bracket function. By Section~\ref{ss.polarisation_Zform} these bracket functions can also be considered as elements of $D(W^*)^G$. The algebra $S(W)$ is $\mb Z^m\times\mb Z^m$-graded and $\mb Z\times\mb Z$-graded and we denote the piece of multidegree $(\alpha^1,\alpha^2)$ by $S^{\alpha^1,\alpha^2}(W)$ and the piece of bidegree $(r_1,r_2)$ by $S^{r_1,r_2}(W)$. We apply analogous notation to the algebras $S(W^*)$, $D(W^*)$ and $D_s(W^*)$.

Let $r_1,r_2\ge0$ be integers and let $\alpha^1=(\alpha^1_1,\ldots,\alpha^1_{m_1})$ and $\alpha^2=(\alpha^2_1,\ldots,\alpha^2_{m_2})$ be compositions of $r_1$ and $r_2$.
As in Section~\ref{ss.Young_conjugacy} we associate to these $\Delta^1_i$, $i\in\{1,\ldots,m_1\}$, $\Delta^2_j$, $j\in\{1,\ldots,m_2\}$, $\zeta_1:\{1,\ldots,r_1\}\to\{1,\ldots,m_1\}$, $\zeta_2:\{1,\ldots,r_2\}\to\{1,\ldots,m_2\}$, and $S_{\alpha^1},S_{\alpha^2}\le S_r$. We have a partial polarisation map
$$P_{\alpha^1,\alpha^2}:S^{\alpha^1,\alpha^2}(W^*)\to S^{\alpha^1,\alpha^2}(W)^*=\Big(\big(V^{\otimes r_1}\otimes(V^*)^{\otimes r_2}\big)^*\Big)^{S_{\alpha^1}\times S_{\alpha^2}}\,.$$
If $F:V^{\oplus r_1}\oplus(V^*)^{\oplus r_2}\to k$ is multilinear and $f$ equals
$$((v_1,\ldots,v_{m_1},w_1,\ldots,w_{m_2})\mapsto F(v_{\zeta_1(1)},\ldots,v_{\zeta_1(r_1)},w_{\zeta_2(1)},\ldots,w_{\zeta_2(r_2)}))\,,$$
then $P_{\alpha^1,\alpha^2}(f)$ equals
$$\big((v_1,\ldots,v_{r_1},w_1,\ldots,w_{r_2})\mapsto\sum_{\sigma\in S_{\alpha^1},\tau\in S_{\alpha^2}}F(v_{\sigma(1)},\ldots,v_{\sigma(r_1)},w_{\tau(1)},\ldots,w_{\tau(r_2)})\big)\,.$$

As in Section~\ref{ss.polarisation_Zform} we obtain isomorphisms $D^{\alpha^1,\alpha^2}(W^*)\cong S^{\alpha^1,\alpha^2}(W)^*$. Under these isomorphisms $D_s^{\alpha^1,\alpha^2}(W^*)$ can be regarded as the multilinear functions $V^{\oplus r_1}\oplus(V^*)^{\oplus r_2}\to k$ which are symmetric in each of the sets of vector positions $\Delta^1_i$ and in each of the sets of covector positions $\Delta^2_i$, and which vanish when the arguments in $p^s$ positions within a $\Delta^\iota_i$, $\iota\in\{1,2\}$, are the same. Furthermore, these isomorphisms are compatible with the isomorphism $D(W^*)\cong S(W)^{*\rm gr}$ from Section~\ref{ss.polarisation_Zform}.

Assume now that $\alpha^1$ and $\alpha^2$ above are compositions of $r$.
The group $S_r\times S_r$ acts on $S_r$ via $(\sigma,\tau)\cdot\pi=\sigma\pi\tau^{-1}$. Each $S_{\alpha^1}\times S_{\alpha^2}$-orbit has a unique representant $\pi$ such that $\pi$ is increasing on each $\Delta^2_j$ and $\pi^{-1}$ is increasing on each $\Delta^1_i$. Let $\pi\in S_r$. Put $\Delta^1_{ij}=\Delta^1_i\cap\pi(\Delta^2_j)$ and $m_{ij}=|\Delta^1_{ij}|$ for $1\le i\le m_1, 1\le j\le m_2$. Then
\begin{equation}\label{eq.mij}
\alpha^1_i=\sum_{j=1}^{m_2}m_{ij}\text{\ and\ }\alpha^2_j=\sum_{i=1}^{m_1}m_{ij}\,.
\end{equation}
For $\sigma,\tau\in S_r$ we have $(\sigma,\tau)\in S_{\alpha^1}\times S_{\alpha^2}$ and $\sigma\pi\tau^{-1}=\pi$ if and only if $\sigma\in S_{\alpha^1}\cap\pi S_{\alpha^2}\pi^{-1}$ and $\tau=\pi^{-1}\sigma\pi$. So the $S_{\alpha^1}\times S_{\alpha^2}$-centraliser of $\pi$ has size $|S_{\alpha^1}\cap\pi S_{\alpha^2}\pi^{-1}|=\prod_{1\le i\le m_1, 1\le j\le m_2}m_{ij}!$.

Conversely, if we are given integers $m_{i,j}\ge0$, $1\le i\le m_1, 1\le j\le m_2$, which sum to $r$, then we can define $\alpha^1$ and $\alpha^2$ by \eqref{eq.mij} and we can define the $\Delta^1_i$ and $\Delta^2_j$ as before. We divide each $\Delta^1_i$ into $m_2$ consecutive intervals $\Delta^1_{i1},\ldots,\Delta^1_{im_2}$ and we divide each $\Delta^2_j$ into $m_1$ consecutive intervals $\Delta^2_{1j},\ldots,\Delta^2_{m_1j}$ such that $\Delta^1_{ij}$ and $\Delta^2_{ij}$ have length $m_{ij}$.
Now we define $\pi\in S_r$ by requiring that $\pi:\Delta^2_{ij}\to\Delta^1_{ij}$ is increasing. Then $\pi$ is increasing on each $\Delta^2_j$ and $\pi^{-1}$ is increasing on each $\Delta^1_i$.

\begin{prop} Let $r_1,r_2\ge0$ be integers.\label{prop.invs}
\begin{enumerate}[{\rm(i)}]
\item If $r_1\ne r_2$, then $D^{r_1,r_2}(W^*)=0$. If $r_1=r_2=r$, then the divided power monomials in the $\la x_i,y_j\ra$ of bidegree $(r,r)$ belong to $D_1^{r,r}(W^*)^G$, and when $n\ge r$ they form a basis of $D^{r,r}(W^*)^G=D_1^{r,r}(W^*)^G$.
\item If $n\ge r_1,r_2$, then $D^{r_1,r_2}(W^*)^\g=D^{r_1,r_2}(W^*)^G$.
\end{enumerate}
\end{prop}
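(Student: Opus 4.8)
The plan is to derive (i) from the first fundamental theorem for $\GL_n$ (Schur--Weyl duality for $V$ and $V^*$, as used for matrices in Section~\ref{ss.invs}) together with the orbit combinatorics recalled in the paragraphs above, and to derive (ii) from Lemma~\ref{lem.inf_invs}(ii). For part (i) I would first use the isomorphisms $D^{\alpha^1,\alpha^2}(W^*)\cong S^{\alpha^1,\alpha^2}(W)^*$ to write $D^{r_1,r_2}(W^*)$ as the direct sum, over the compositions $\alpha^1$ of $r_1$ and $\alpha^2$ of $r_2$, of the $\GL_n$-submodules $((V^{\otimes r_1}\ot(V^*)^{\otimes r_2})^*)^{S_{\alpha^1}\times S_{\alpha^2}}$ of $N:=(V^{\otimes r_1}\ot(V^*)^{\otimes r_2})^*$. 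Since the scalars $\lambda\in k^\times\subseteq G$ act on $N$ by $\lambda^{r_2-r_1}$, we have $N^G=0$, and hence $D^{r_1,r_2}(W^*)^G=0$, when $r_1\ne r_2$. For $r_1=r_2=r$ the first fundamental theorem gives that $N^G$ is spanned by the complete contractions $c_\sigma$, $\sigma\in S_r$, with $c_\sigma(v_1,\ldots,v_r,\phi_1,\ldots,\phi_r)=\prod_{l=1}^r\phi_{\sigma(l)}(v_l)$, and that these form a basis when $n\ge r$; taking $S_{\alpha^1}\times S_{\alpha^2}$-invariants then shows that $D^{\alpha^1,\alpha^2}(W^*)^G$ is spanned by the orbit sums $\sum_{\sigma\in O}c_\sigma$ over the $S_{\alpha^1}\times S_{\alpha^2}$-orbits $O$ on $S_r$, a basis when $n\ge r$.

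Next I would identify these orbit sums with divided power monomials. By the orbit description recalled above, the $S_{\alpha^1}\times S_{\alpha^2}$-orbits on $S_r$, as $(\alpha^1,\alpha^2)$ ranges over all pairs of compositions of $r$, are indexed by the matrices $(m_{ij})_{1\le i\le m_1,\,1\le j\le m_2}$ of non-negative integers with $\sum_{i,j}m_{ij}=r$, the orbit for $(m_{ij})$ having stabiliser of order $\prod_{i,j}m_{ij}!$; and the same matrices index the divided power monomials $\prod_{i,j}\la x_i,y_j\ra^{(m_{ij})}$ of bidegree $(r,r)$. Tracking the polarisation isomorphism $D(W^*)\cong S(W)^{*\rm gr}$ of Section~\ref{ss.polarisation_Zform} on such a monomial --- exactly as for the divided $p_\lambda$'s in Section~\ref{ss.invs}, with the factorials $\prod_{i,j}m_{ij}!$ absorbed into the stabiliser order --- should identify $\prod_{i,j}\la x_i,y_j\ra^{(m_{ij})}$ with the orbit sum for $(m_{ij})$; in particular it lies in $D^{r,r}(W^*)^G$. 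That it also lies in $D_1^{r,r}(W^*)$ I would check directly: each $\la x_i,y_j\ra$ is a sum of squarefree degree-$2$ monomials in the coordinate functions on $W$, hence lies in $D_1^2(W^*)\subseteq B=\bigoplus_{r\ge2}D_1^r(W^*)$, and $B$ is closed under products and under all $\gamma_t$, $t\ge1$, by Lemma~\ref{lem.divpower} applied to $W^*$. Since the orbit sums span $D^{r,r}(W^*)^G$ for every $n$ and are linearly independent when $n\ge r$, this yields the basis statement and the equality $D^{r,r}(W^*)^G=D_1^{r,r}(W^*)^G$.

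For part (ii) I would decompose $D^{r_1,r_2}(W^*)=\bigoplus_{\alpha^1,\alpha^2}D^{\alpha^1,\alpha^2}(W^*)$ into its multidegree components as above; these are $G$-submodules, hence $\g$-submodules, so it is enough to treat one pair $(\alpha^1,\alpha^2)$. Regarding $M:=D^{\alpha^1,\alpha^2}(W^*)$ as a $\g$-submodule of $N=(V^{\otimes r_1}\ot(V^*)^{\otimes r_2})^*$, we have $M^\g=M\cap N^\g$ and $M^G=M\cap N^G$, while $N^\g=N^G$ by Lemma~\ref{lem.inf_invs}(ii) since $n\ge r_1,r_2$; hence $M^\g=M^G$, and summing over $(\alpha^1,\alpha^2)$ gives $D^{r_1,r_2}(W^*)^\g=D^{r_1,r_2}(W^*)^G$.

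The hard part will be in (i): verifying that polarisation carries $\prod_{i,j}\la x_i,y_j\ra^{(m_{ij})}$ to precisely the $S_{\alpha^1}\times S_{\alpha^2}$-orbit sum indexed by $(m_{ij})$, with the correct normalisation, and invoking the first fundamental theorem's spanning statement --- not merely the linear independence for $n\ge r$ --- in characteristic $p$. Granting the orbit combinatorics of the preceding paragraphs together with Lemmas~\ref{lem.divpower} and~\ref{lem.inf_invs}, the rest is routine assembly.
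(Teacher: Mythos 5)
Your proposal is correct and follows essentially the same route as the paper: the centre of $G$ kills the invariants when $r_1\ne r_2$, Lemma~\ref{lem.divpower} gives membership in $D_1$, the $S_r\times S_r$-equivariant map $\pi\mapsto f_\pi$ onto the complete contractions identifies the $S_{\alpha^1}\times S_{\alpha^2}$-orbit sums (with stabiliser order $\prod_{i,j}m_{ij}!$) with the divided power monomials $\prod_{i,j}\la x_i,y_j\ra^{(m_{ij})}$ under partial polarisation, and part (ii) reduces to Lemma~\ref{lem.inf_invs}(ii) via the multidegree decomposition. The step you flag as "the hard part" is exactly the computation the paper carries out, using the explicit representative $\pi$ constructed from the matrix $(m_{ij})$ just before the proposition.
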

\begin{proof}
(i).\ By considering the action of the centre of $G$ it follows that if $r_1\ne r_2$, then $D^{r_1,r_2}(W^*)^G=0$, so we assume now that $r_1=r_2=r$.
By Lemma~\ref{lem.divpower} the given monomials belong to $D_1^{r,r}(W^*)$.
Denote the vector and covector component functions of $V^{\oplus r}\oplus(V^*)^{\oplus r}$ by $\ov x_i$ and $\ov y_i$, $i\in\{1,\ldots,r\}$.
The function $f_\pi\in(\g^{\otimes r})^*$ from Section~\ref{ss.invs} can also be seen as an element of $(V^{\otimes r}\otimes(V^*)^{\otimes r})^*$. Then we have
$f_\pi=\prod_{i=1}^r\la\ov x_{\pi(i)},\ov y_i\ra$ and we see that the map $\pi\mapsto f_\pi$ is $S_r\times S_r$-equivariant.

Let $m_{i,j}\ge0$, $1\le i\le m_1, 1\le j\le m_2$, be integers which sum to $r$. Define $\alpha^1$ and $\alpha^2$ by \eqref{eq.mij} and then define $\Delta^1_i$, $\Delta^2_j$, $\zeta_1$, $\zeta_2$, $S_{\alpha^1}$, $S_{\alpha^2}$ as in Section~\ref{ss.partial_polarisation}, and define $\pi$ as before the proposition.
It is easy to see that $\prod_{1\le i\le m_1,1\le j\le m_2}\la x_i,y_j\ra^{m_{ij}}=\prod_{i=1}^r\la x_{\zeta_1(\pi(i))},y_{\zeta_2(i)}\ra$.
So as an element of $S^{r,r}(W_{\mb Q})^*$, via the partial polarisation map $P_{\alpha^1,\alpha^2}$, it is
$\sum_{\sigma\in S_{\alpha^1},\tau\in S_{\alpha^2}}\prod_{i=1}^r\la\ov x_{\sigma(\pi(i))},\ov y_{\tau(i)}\ra=\sum_{\sigma\in S_{\alpha^1},\tau\in S_{\alpha^2}}f_{\sigma\pi\tau^{-1}}$. So under the $S_r\times S_r$-equivariant isomorphism $\pi\mapsto f_\pi:kS_r\to\big((V^{\otimes r}\otimes(V^*)^{\otimes r})^*\big)^G$ the sum of the orbit $[\pi]_{S_{\alpha^1}\times S_{\alpha^2}}$ corresponds to
$\prod_{1\le i\le m_1,1\le j\le m_2}\la x_i,y_j\ra^{(m_{ij})}$. So these divided power monomials form a basis of $D^{r,r}(W^*)^G=\bigoplus_{\alpha^1,\alpha^2}D^{\alpha^1,\alpha^2}(W^*)^G$.\\
(ii).\ As $D^{\alpha^1,\alpha^2}(W)^*=\Big(\big(V^{\otimes r_1}\otimes(V^*)^{\otimes r_2}\big)^*\Big)^{S_{\alpha^1}\times S_{\alpha^2}}$, this follows from Lemma~\ref{lem.inf_invs}(ii).
\end{proof}

Note that we have a natural embedding $V_{n-1}\hookrightarrow V_n$ by adding a zero component in the $n$-th position, and a natural embedding $V_{n-1}^*\hookrightarrow V_n^*$ by extending a function $f\in V_{n-1}^*$ by sending the $n$-th standard basis vector to 0. This gives us a natural embedding $W_{n-1}\hookrightarrow W_n$, and we get restriction maps for the algebras $(k[W_n])_{n\ge1}$, $(D(W_n^*))_{n\ge1}$ and $(D_s(W_n^*))_{n\ge1}$. From the previous proposition we immediately obtain the following corollary, where we may omit the subscript $s$.
\begin{cornn}
$$\lim_{\stackrel{\longleftarrow}{n}}(D_s(W_n^*))^{\gl_n}=\lim_{\stackrel{\longleftarrow}{n}}(D_s(W_n^*))^{\GL_n}=D(\la x_i,y_j\ra_{1\le i\le m_1,1\le j\le m_2}),$$ where the grading is such that $\la x_i,y_j\ra^{(t)}$ has degree $2t$, and the limit is in the category of graded $k$-algebras.
\end{cornn}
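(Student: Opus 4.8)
The plan is to deduce the corollary degreewise from Proposition~\ref{prop.invs}, as the word ``immediately'' in the text indicates. First I would set up the transition maps. The embedding $W_{n-1}\hookrightarrow W_n$ from the paragraph preceding the corollary yields, exactly as for the algebras $A_s$ in Section~\ref{ss.res}, a surjection of graded algebras $D_s(W_n^*)\twoheadrightarrow D_s(W_{n-1}^*)$ that is equivariant for $\GL_{n-1}\hookrightarrow\GL_n$ and for $\gl_{n-1}\hookrightarrow\gl_n$; hence it restricts to graded-algebra maps $(D_s(W_n^*))^{\GL_n}\to(D_s(W_{n-1}^*))^{\GL_{n-1}}$ and $(D_s(W_n^*))^{\gl_n}\to(D_s(W_{n-1}^*))^{\gl_{n-1}}$. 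The one point I would treat with a little care — and the only thing here that is not wholly mechanical — is that these maps send a divided power $\la x_i,y_j\ra^{(t)}$ to $\la x_i,y_j\ra^{(t)}$; since one cannot divide $\la x_i,y_j\ra^t$ by $t!$ mod $p$, I would check this by lifting to $S(W_{n,\mb Q}^*)$, where restriction of functions sends $\la x_i,y_j\ra$ to $\la x_i,y_j\ra$ and hence $\frac{1}{t!}\la x_i,y_j\ra^t$ to $\frac{1}{t!}\la x_i,y_j\ra^t$, and then reducing mod $p$. So the transition maps carry divided power monomials in the brackets (for $W_n$) to the corresponding divided power monomials (for $W_{n-1}$).

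Next I would pin down each graded piece. By Proposition~\ref{prop.invs}(i) the invariant algebra is supported in bidegrees $(r,r)$, and for $n\ge r$ its $(r,r)$-component satisfies $(D_s^{r,r}(W_n^*))^{\GL_n}=(D^{r,r}(W_n^*))^{\GL_n}$ — this uses $D_1\subseteq D_s\subseteq D$ together with the fact, from Proposition~\ref{prop.invs}(i), that the two outer invariant spaces already coincide, which is precisely why the subscript $s$ plays no role — and this space has the divided power monomials $\prod_{1\le i\le m_1,\,1\le j\le m_2}\la x_i,y_j\ra^{(m_{ij})}$ with $\sum_{i,j}m_{ij}=r$ as a basis. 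Since $D_s^{r,r}(W_n^*)$ is a $G$-submodule of $D^{r,r}(W_n^*)$, Proposition~\ref{prop.invs}(ii) then gives $(D_s^{r,r}(W_n^*))^{\gl_n}=(D_s^{r,r}(W_n^*))^{\GL_n}$ as well, still for $n\ge r$.

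Finally I would assemble the inverse limit. For $n\ge r+1$ the transition map in bidegree $(r,r)$ sends the basis just described bijectively onto the corresponding basis for $W_{n-1}$ (the hypothesis $n-1\ge r$ is what makes the target have this basis), hence is an isomorphism, for both the group and the Lie-algebra invariants. Thus each fixed graded piece of the two inverse systems is eventually constant, so the graded inverse limit — computed degreewise — is, in total degree $2r$, the stable space $(D_s^{r,r}(W_r^*))^{\GL_r}$. Collecting these over $r$, both $\lim_{\stackrel{\longleftarrow}{n}}(D_s(W_n^*))^{\GL_n}$ and $\lim_{\stackrel{\longleftarrow}{n}}(D_s(W_n^*))^{\gl_n}$ have, in degree $2r$, a basis indexed by the divided power monomials $\prod_{i,j}\la x_i,y_j\ra^{(m_{ij})}$ with $\sum_{i,j}m_{ij}=r$ in the $m_1m_2$ symbols $\la x_i,y_j\ra$, and by \eqref{eq.divpower_prod2} these multiply exactly as in the free divided power algebra on those symbols. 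Hence both limits are identified with $D(\la x_i,y_j\ra_{1\le i\le m_1,1\le j\le m_2})$, graded so that $\la x_i,y_j\ra^{(t)}$ has degree $2t$. I do not expect a genuine obstacle; the only steps needing attention are the divided-power compatibility of the restriction maps and the routine check that one may pass the inverse limit through the grading.
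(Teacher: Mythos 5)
Your argument is correct and is exactly the intended deduction: the paper itself gives no separate proof, saying only that the corollary is ``immediately obtained'' from Proposition~\ref{prop.invs}, and your write-up fills in precisely the routine steps that phrase suppresses (compatibility of the restriction maps with divided powers via the $\mb Z$-forms, the coincidence $D^{r,r}(W^*)^G=D_s^{r,r}(W^*)^G=D_s^{r,r}(W^*)^\g$ for $n\ge r$, and stabilisation of the graded pieces). No gaps.
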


\begin{rems}\label{rems.vecs_and_covecs}
1.\ It is immediate from classical invariant theory, see \cite{DeCProc}, that the algebras $(k[W_n])_{n\ge1}$ have the restriction property.\\
2.\ Since $W_n\cong(V_n^{\oplus m_2}\oplus (V_n^*)^{\oplus m_1})^*$, we get restriction maps $W_n\to W_{n-1}$. From the description of $\bigwedge(W_n)^G$ in \cite[Sect~5]{AR} it is clear that the algebras $\bigwedge(W_n)_{n\ge1}$ have the restriction property. This implies that when $p=2$, the algebras $(A_1(W_n))_{n\ge1}$ have the restriction property.\\
3.\ For $p=3$ the algebras $(D(W^*_n))_{n\ge1}$ and $(D_s(W^*_n))_{n\ge1}$ don't have the restriction property. I checked with the computer for $p=3,n=2,m_1=1$,$m_2=3$ that $\dim D^r_1(W^*_n)=1, 0, 3, 0, 6, 0, 11, 0, 15$ for $r=0,\ldots,8$ and $0$ for $r>8$, and that the dimensions of the span of the invariants from Proposition~\ref{prop.invs} in degrees $=0,\ldots,8$ are $1, 0, 3, 0, 6, 0, 10, 0, 15$. In degree 6 the invariant $x_{1}x_{2}(x_{1}y_{21} - x_{2}y_{22}) (y_{12}y_{31}- y_{11}y_{32})$ is outside this span, where $y_{ji}$ denotes the $i$-th component of the $j$-th covector.\\
4.\ Similar to \cite[Sect~5]{AR} one could try to determine the invariants in $A_1(W_n)=D_1(W_n^*)$ by using the isomorphism $A_1(W_n)\cong A_1((V_n^*)^{\oplus m})\otimes\det^{m_1(1-p)}$, $m=m_1+m_2$, of $\GL_n$-modules, and then use the commuting $\GL_m$-action. Let $U_n\le\GL_n$ be the subgroup of upper uni-triangular matrices. Then we get $A_1(W_n)^{\GL_n}\cong A_1((V_n^*)^{\oplus m})^{U_n}_{m_1(p-1)\un 1_n}$, where $\un 1_n$ is the all-one vector of length $n$. Now one could hope that
$A_1((V_n^*)^{\oplus m})^{U_n}_{(p-1)\nu}\cong\Delta_{\GL_m}((p-1)\nu^T)$, $\Delta_{\GL_m}(\mu)$ the Weyl module of highest weight $\mu$ and $\nu^T$ the transpose of $\nu$, at least for $\nu$ a multiple of $\un 1_n$. Indeed the analogue for the exterior algebra holds by \cite{ABW} or \cite{AR}. 
However, in the case $p=3,n=2,m_1=1,m_2=3$, $A_1((V_2^*)^{\oplus 4})^{U_2}_{(2,2)}$ is not even a quotient of some Weyl module. Indeed its socle and ascending radical series both have two layers: the first one is the irreducible $L_{\GL_4}(2,2,0,0)$ of dimension 19 and the second layer is $L_{\GL_4}(1,1,1,1)\oplus L_{\GL_4}(4,0,0,0)$ of dimension $1+16=17$. The Weyl module $\Delta_{\GL_4}(4,0,0,0)$ has dimension $35$ and the two layers of its socle and ascending radical series are $L_{\GL_4}(2,2,0,0)$ and $L_{\GL_4}(4,0,0,0)$.
\end{rems}


\begin{thebibliography}{99}
\bibitem{AR} A.~M.~Adamovich, G.~L.~Rybnikov, {\it Tilting modules for classical groups and Howe duality in positive characteristic}, Transform. Groups {\bf 1} (1996), no. 1-2, 1-34.
\bibitem{ABW} K.~Akin, D.~A.~Buchsbaum, J.~Weyman, {\it Schur functors and Schur complexes}, Adv. in Math. {\bf 44} (1982), no. 3, 207-278.
\bibitem{Bou} N.\ Bourbaki, {\it Alg\`ebre}, Chaps. 1, 2 et 3, Hermann, Paris, 1970.
\bibitem{DeCProc} C.\ De Concini, C.\ Procesi, {\it A characteristic free approach to invariant theory}, Advances in Math. {\bf 21} (1976), no. 3, 330-354.
\bibitem{Don} S.~Donkin, {\it Invariant functions on matrices}, Math. Proc. Cambridge Philos. Soc. {\bf113} (1993), no. 1, 23-43.
\bibitem{FP} E.~M.~Friedlander and B.~J.~Parshall, {\it Rational actions associated to the adjoint representation}, Ann. Sci. \'Ecole Norm. Sup. (4) {\bf20} (1987), no. 2, 215-226.
\bibitem{Green} J.~A.~Green, {\it Polynomial representations of ${\rm GL}\sb{n}$}, Lecture Notes in Mathematics, {\bf 830}, Springer-Verlag, Berlin-New York, 1980.
\bibitem{Hab} W.~J.~Haboush, {\it Central differential operators on split semisimple groups over fields of positive characteristic}, S\'eminaire d'Alg\`ebre Paul Dubreil et Marie-Paule Malliavin, 32\`eme ann\'ee (Paris, 1979), pp. 35-85, Lecture Notes in Math. {\bf795}, Springer, Berlin, 1980.
\bibitem{Jan} J.\ C.\ Jantzen, {\it Representations of algebraic groups}, Pure and Applied Math., vol.~131. Academic Press, Boston, 1987.
\bibitem{Mac} I. G. Macdonald, {\it Symmetric functions and Hall polynomials}, Second edition, Oxford University Press, New York, 1995.
\bibitem{OO} A.~Yu.~Okounkov and G.~I.~Olshanskii, {\it Shifted Schur functions} (Russian), Algebra i Analiz {\bf 9} (1997), no.2, 73-146; translation in
St. Petersburg Math. J. {\bf 9} (1998), no.2, 239-300.
\bibitem{Pr} A.~Premet, {\it Special transverse slices and their enveloping algebras}, Adv. Math. {\bf170} (2002), no. 1, 1–55.
\bibitem{PrT} A.\ A.\ Premet and R.\ H.\ Tange, {\it Zassenhaus varieties of general linear Lie algebras}, J. Algebra {\bf 294} (2005), no. 1, 177-195.
\bibitem{Skr} S.~Skryabin, {\it Invariants of finite group schemes}, J. London Math. Soc. (2) {\bf65} (2002), no. 2, 339-360.
\bibitem{T} R.~Tange, {\it On the first restricted cohomology of a reductive Lie algebra and its Borel subalgebras}, Ann. Inst. Fourier (Grenoble) {\bf69} (2019), no. 3, 1295-1308.
\end{thebibliography}
\end{document}